\documentclass[11pt]{article}

\usepackage[T1]{fontenc}
\usepackage{lmodern}
\usepackage{amsmath,amssymb,amsthm,mathtools}
\usepackage[margin=1.08in]{geometry}
\usepackage{microtype}
\usepackage{tikz}
\usetikzlibrary{arrows.meta,positioning,calc}
\usepackage[section]{placeins}
\usepackage[hidelinks]{hyperref}

\newtheorem{theorem}{Theorem}[section]
\newtheorem{proposition}[theorem]{Proposition}
\newtheorem{lemma}[theorem]{Lemma}

\theoremstyle{definition}
\newtheorem{definition}[theorem]{Definition}
\theoremstyle{remark}
\newtheorem{remark}[theorem]{Remark}

\newcommand{\PP}{\mathbf P}
\newcommand{\Span}{\operatorname{span}}
\newcommand{\QQRT}{\mathcal Q}

\tikzset{
  modelcurve/.style={very thick,blue!65!black},
  construction/.style={thick,dashed,red!72!black},
  secondconstruction/.style={thick,dash dot,teal!70!black},
  guide/.style={thin,densely dotted,black!55},
  modelpoint/.style={circle,fill=black,inner sep=1.55pt},
  sumpoint/.style={circle,fill=blue!65!black,inner sep=1.8pt}
}

\title{Direct Group Laws on Classical Genus-One Models}
\author{Hongfeng Wu\\
College of Science, North China University of Technology\\
\texttt{whfmat@gmail.com}}
\date{}

\begin{document}

\maketitle

\begin{abstract}
This exposition constructs group laws by residual intersection on short Weierstrass cubics,
the curves $(u^2+u)(v^2+v)=d$, Edwards curves, Hessian curves, Jacobi
quartics, Jacobi intersections, Huff curves, and a symmetric QRT
biquadratic.  Lines give the operation on plane cubics, while planes give the
operation on smooth intersections of two quadrics in $\PP^3$.  Associativity
is proved by comparing two products of three line or plane equations.  After
the common zeros are cancelled, equality of the remaining zeros follows from
the elementary fact that a smooth projective curve of genus one admits no
nonconstant rational function with one simple pole.  The formulas for
$(u^2+u)(v^2+v)=d$ are valid in every characteristic and are written
explicitly in characteristics $2$ and $3$; the other models are treated in
characteristic different from $2$ and $3$.
\end{abstract}

\noindent\textbf{Keywords.}
Elliptic curve, group law, associativity, residual intersection, Edwards
curve, QRT biquadratic.

\noindent\textbf{2020 Mathematics Subject Classification.}
Primary 14H52; Secondary 14H50.

\section{Introduction}

An explicit addition formula must be accompanied by proofs of closure,
identity, inverses, and associativity, together with a treatment of exceptional
inputs.  The constructions below are defined on the natural projective
completions of the displayed equations, so tangent and boundary cases are part
of the same operation.

Two constructions suffice.  On a smooth plane cubic with a chosen flex, the
usual chord--tangent rule is intrinsic to the displayed cubic.  On a smooth
elliptic normal quartic, that is, a smooth intersection of two quadrics in
$\PP^3$, a plane through three prescribed points has one residual intersection
point.  A model-specific involution turns that residual point into the sum.
Associativity in both cases is obtained by comparing two products of three
lines or three planes.  Their restrictions to the curve have all but one zero,
counted with multiplicity, in common.  The last zeros coincide because a
genus-one curve admits no rational function with one simple pole.  This is the
one-residual-point form of the Cayley--Bacharach principle used throughout the
paper. The exposition is intended for readers familiar with the basic language 
of smooth projective curves and intersection multiplicities.

Edwards introduced his normal form and addition law in \cite{Edwards2007}, and
Bernstein and Lange developed its arithmetic in \cite{BernsteinLange2007}.
Hales proved the Edwards group law computationally, and Hales and Raya later
formalized that proof \cite{Hales2016,HalesRaya2020}.  Hessian formulas appear
in the work discussed by Chudnovsky and Chudnovsky \cite{Chudnovsky1986} and
in \cite{JoyeQuisquater2001}.  References for Jacobi quartics and intersections
include \cite{LiardetSmart2001,BilletJoye2003}; the twisted intersection model
was developed by Feng, Nie, and Wu \cite{FengNieWu2013}.  Huff arithmetic is
treated in \cite{Huff1948,JoyeTibouchiVergnaud2010,WuFeng2012}.  Wu develops
the characteristic-uniform arithmetic of $C_d$ and a wider symmetric family
in \cite{Wu2026Cd}.  The QRT maps originate in the work of Quispel, Roberts,
and Thompson \cite{QRT1988,QRT1989}.  Zwegers's proof for a Weierstrass cubic
uses a parabola to obtain a symmetric three-point formula
\cite{Zwegers2024}; the relation with the Jacobi-quartic parabola is described
in Section~\ref{sec:Jacobi-quartic}.

Throughout, $k$ is a field.  Intersections are taken scheme-theoretically and
may be checked after extending scalars to an algebraic closure.  A residual
subscheme of length one obtained from $k$-rational data is Galois invariant and
therefore is a $k$-rational point.

The figures are schematic incidence diagrams.  Blue denotes the curve, red the
first chord or plane trace, and teal a second construction through the chosen
origin.  Labels and intersection multiplicities determine the construction;
Euclidean scale is not significant.

\section{Two residual-point principles}

\subsection{The one-pole lemma}

\begin{lemma}[One-pole lemma]\label{lem:one-pole}
Let $C$ be a smooth projective geometrically integral curve of genus one over
$k$.  A rational function on $C$ with at most one simple pole is constant.
\end{lemma}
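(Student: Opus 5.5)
The plan is to reduce the statement to the nonexistence of a degree-one map from $C$ to $\PP^1$. We split into the two cases permitted by the hypothesis: no pole, or exactly one simple pole.

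\emph{No pole.} A rational function $f$ on $C$ without poles is a regular function on a projective, geometrically integral curve. Hence $f\in H^0(C,\mathcal O_C)=k$, so $f$ is constant. This case needs nothing beyond the properness and geometric integrality of $C$.

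\emph{One simple pole.} Suppose $f$ has exactly one simple pole, at a point $P$. We may pass to an algebraic closure $\bar k$, since constancy of $f$ can be checked there and smoothness, geometric integrality and genus are preserved. If $P$ is not rational, its geometric points each carry a pole, so there are several poles unless $\deg P=1$. Thus we may take $P$ to be a $k$-point. Then $f$ defines a morphism $f\colon C\to\PP^1$ whose degree equals the degree of the pole divisor, namely $\deg f^{*}(\infty)=1$. A finite morphism of degree one between smooth projective curves is birational, and a birational morphism between smooth projective curves is an isomorphism. Hence $C\cong\PP^1$, so $C$ would have genus $0$, which contradicts the hypothesis that the genus is one.

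An equivalent route uses Riemann--Roch. For $\deg D=1>2g-2=0$ on a genus-one curve, $\ell(D)=\deg D+1-g=1$. Therefore $L(P)=k$, and any $f\in L(P)$ is constant. I would present this version as the main argument because it is a one-line computation. I would give the $\PP^1$ argument as the intuition behind it.

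The only step needing care is the identification of degree with pole count, or equivalently the reduction to the case where $P$ is a rational point. The Riemann--Roch formulation avoids that reduction entirely, since $\ell(D)=\deg D$ for $\deg D\ge 1$ holds for any effective divisor $D$ on $C$ over $k$. If $f$ had a non-rational simple pole, the pole divisor would have degree $\deg P\ge 1$. In that case the conclusion is simply that $f$ lies in $L(P)$, and the hypothesis "one simple pole" in the paper's geometric sense means $\deg P=1$. I therefore expect no real obstacle here; the argument is standard.
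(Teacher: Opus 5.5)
Your degree-one-map argument is essentially the paper's proof. The paper passes to $\bar k$ and notes that a nonconstant $f$ gives a finite morphism $C\to\PP^1$ whose degree is the total pole order. That degree is therefore one, so the morphism is an isomorphism, contradicting $g(C)=1$. Your separate treatment of the pole-free case via $H^0(C,\mathcal O_C)=k$ is slightly more careful than the paper: its phrase ``this degree would be one'' silently uses that a nonconstant function has at least one pole.

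The Riemann--Roch version you propose to lead with is a genuinely different route. The paper's route uses only that degree equals pole count, that degree-one maps of smooth projective curves are isomorphisms, and that $g(\PP^1)=0$. Yours computes $\ell(P)=1$ from the full Riemann--Roch theorem, including the duality input $h^1(\mathcal O_C(D))=\ell(K_C-D)$ and $\deg K_C=0$. That is heavier machinery, but it gives more: $\ell(D)=\deg D$ for every divisor of positive degree. This is the usual starting point for the group law via divisor classes.

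One correction to your last paragraph: Riemann--Roch does not avoid the rationality issue. If $P$ is a closed point of degree $d\ge 2$, then $\ell(P)=d\ge 2$, so there are nonconstant $f$ with $v_P(f)=-1$. The lemma holds only in the geometric reading, where the pole divisor has degree one. That reading is exactly what the paper's opening sentence, ``work over an algebraic closure,'' encodes.

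A smaller point: when $k(P)/k$ is purely inseparable, $P$ does not split into several geometric poles. It becomes a single geometric pole of order $[k(P):k]$. Your conclusion that the geometric hypothesis forces $\deg P=1$ is unaffected.
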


\begin{proof}
It is enough to work over an algebraic closure of $k$.  A nonconstant rational
function $f$ defines a finite morphism $C\to\PP^1$, whose degree is the sum of
the pole orders of $f$.  If $f$ had at most one simple pole, this degree would
be one.  A finite morphism of degree one between smooth projective curves is an
isomorphism, contradicting $g(C)=1$ and $g(\PP^1)=0$.
\end{proof}

\begin{lemma}[Residual point]\label{lem:residual}
Let $C$ be as in Lemma~\ref{lem:one-pole}.  Suppose that two homogeneous forms
of the same degree restrict nontrivially to $C$, and that their zero schemes on
$C$ agree except possibly for one residual point of length one on each side.
Then the two residual points are equal.
\end{lemma}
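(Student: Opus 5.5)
The plan is to take the quotient of the two forms and apply Lemma~\ref{lem:one-pole} to it. Let $F$ and $G$ be the two homogeneous forms of common degree $m$. Write their zero schemes on $C$, which are effective divisors since $C$ is a smooth curve, as
\[
\operatorname{div}_C(F)=D+P,\qquad \operatorname{div}_C(G)=D+Q,
\]
where $D$ is the common effective part and $P$, $Q$ are the residual points of length one. As in Lemma~\ref{lem:one-pole}, we may work over an algebraic closure of $k$, so $P$ and $Q$ are closed points of degree one. Since $F$ and $G$ restrict nontrivially to $C$ and have the same degree, the quotient $f=F/G$ is a well-defined nonzero rational function on $C$. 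Its divisor is the difference of the two zero schemes:
\[
\operatorname{div}(f)=(D+P)-(D+Q)=P-Q.
\]
To justify this, choose a linear form $L$ not vanishing at the finitely many points involved. Then $F/L^m$ and $G/L^m$ are regular functions near those points, and $f$ is their ratio. Locally, the order of vanishing of $f$ at each point is therefore the difference of the local intersection multiplicities of $F$ and $G$ there.

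It follows that $f$ has at most one pole, namely $Q$, and that pole is simple. Lemma~\ref{lem:one-pole} then says $f$ is constant. A nonzero constant has trivial divisor, so $P-Q=0$ and hence $P=Q$.

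The main point needing care is the phrase ``agree except possibly for one residual point of length one on each side'', in the case where $P$ or $Q$ lies in the support of $D$. The divisor formulation above handles this uniformly, because the multiplicities simply add. One then reads off $\operatorname{div}(f)=P-Q$ regardless of any coincidences. The formulation also covers the degenerate case $P=Q$ trivially.
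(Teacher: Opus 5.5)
Your proof is correct and follows the same route as the paper. You take the quotient of the two restricted forms, note that the common zeros cancel with their multiplicities so that its divisor is $P-Q$, and apply Lemma~\ref{lem:one-pole}; your divisor bookkeeping just spells out the paper's remark that the conclusion is an equality of local vanishing orders.
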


\begin{proof}
The quotient of the two restricted forms is a rational function on $C$.  All
common zeros cancel with their local multiplicities.  If the residual points
were different, the quotient or its reciprocal would have exactly one simple
pole.  Lemma~\ref{lem:one-pole} rules this out.
\end{proof}

This formulation includes coincident points: equality is an equality of local
vanishing orders, not merely of sets.

\subsection{Plane cubics}

Let $C\subset\PP^2$ be a smooth plane cubic and let $O\in C(k)$ be a flex.
For $P\in C$, define $\iota(P)$ by
\[
             L(O,P)\cap C=O+P+\iota(P),
\]
where $L(O,O)$ is the tangent at $O$ and repeated points denote their
scheme-theoretic multiplicities.  The flex condition gives $\iota(O)=O$.
The same line contains $O,\iota(P),P$, so $\iota^2(P)=P$.

\begin{theorem}[Chord--tangent group law]\label{thm:cubic}
For $P,Q\in C(k)$, let the line through $P$ and $Q$ meet $C$ once more at $R$,
with tangent multiplicities when necessary, and define
\[
        P\oplus Q=\iota(R).
\]
Then $(C(k),\oplus)$ is an abelian group with identity $O$ and inverse
$-P=\iota(P)$.
\end{theorem}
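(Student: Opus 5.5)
The plan is to dispose of closure, commutativity, identity and inverses by direct incidence arguments, and to prove associativity with Lemma~\ref{lem:residual} applied to two products of three linear forms.

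\emph{Elementary axioms.} For $P,Q\in C(k)$ the line $L(P,Q)$ (the tangent if $P=Q$) is defined over $k$. By B\'ezout it meets $C$ in a subscheme of length three, so removing $P+Q$ leaves a length-one residual $R$, which is $k$-rational by the Galois remark in the introduction. The same holds for $\iota$, so $\oplus$ is well defined on $C(k)$. Commutativity holds because $L(P,Q)=L(Q,P)$. For the identity: the line $L(O,P)$ meets $C$ in $O+P+\iota(P)$, so the residual for the pair $(P,O)$ is $\iota(P)$, and $P\oplus O=\iota^2(P)=P$. For inverses: $L(P,\iota(P))=L(O,P)$ has residual $O$, so $P\oplus\iota(P)=\iota(O)=O$ by the flex condition. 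The tangent and coincident cases need no separate treatment, since everything is phrased as equalities of divisors of length three.

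\emph{Associativity.} Fix $P,Q,S$. Write $R=$ residual of $L(P,Q)$, so that $P\oplus Q=\iota(R)$, and let $T$ be the residual of $L(\iota(R),S)$, so that $(P\oplus Q)\oplus S=\iota(T)$. Symmetrically, let $R'$ be the residual of $L(Q,S)$, so $Q\oplus S=\iota(R')$, and let $T'$ be the residual of $L(P,\iota(R'))$, so $P\oplus(Q\oplus S)=\iota(T')$. It suffices to show $T=T'$. Take cubic forms
\[
F_1=L(P,Q)\cdot L(O,\iota(R'))\cdot L(\iota(R),S),\qquad
F_2=L(Q,S)\cdot L(O,\iota(R))\cdot L(P,\iota(R')).
\]
Their zero divisors on $C$ are
\[
(P+Q+R)+(O+\iota(R')+R')+(\iota(R)+S+T)
\]
and
\[
(Q+S+R')+(O+\iota(R)+R)+(P+\iota(R')+T')
\]
respectively, using $L(O,X)\cap C=O+X+\iota(X)$. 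After cancelling the common part $P+Q+S+O+R+R'+\iota(R)+\iota(R')$, what remains is $T$ on one side and $T'$ on the other. Since each zero divisor is just the sum of three line sections, the cancellation is an equality of effective divisors and is valid with multiplicities even when points coincide. Lemma~\ref{lem:residual} then gives $T=T'$.

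\emph{Main obstacle.} The step needing care is the hypothesis of Lemma~\ref{lem:residual} that each form restricts nontrivially to $C$. This is automatic: $C$ is a smooth, hence irreducible, cubic and contains no line. The remaining subtle point is the multiplicity bookkeeping when, say, $L(P,Q)$ coincides with one of the other lines. Here I would stress that the zero divisor of a product is the sum of the divisors of the factors, so repeated lines only add multiplicities and never break the cancellation. This is exactly why the lemma is stated for local vanishing orders rather than for sets.
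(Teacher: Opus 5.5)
Your proposal is correct and is essentially the paper's proof. After relabelling, your $F_1,F_2$ are exactly the paper's cubics $L(P,Q)L(S,R)N(V)$ and $L(Q,R)L(P,V)N(S)$: your third input $S$ is the paper's $R$, and your $\iota(R),\iota(R')$ are the paper's $S,V$. Your argument then uses the same degree-eight common divisor and the same appeal to Lemma~\ref{lem:residual}, and handles the identity and inverse axioms by the same incidence argument.
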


\begin{proof}
The rule is symmetric in $P,Q$.  The line $OP$ has third point $\iota(P)$, so
$O\oplus P=P$.  The line through $P$ and $\iota(P)$ has residual point $O$;
hence $P\oplus\iota(P)=O$.

It remains to prove associativity.  Write
\[
 S=P\oplus Q,\qquad T=S\oplus R,
 \qquad V=Q\oplus R,\qquad T'=P\oplus V.
\]
Let $L(A,B)$ denote the line through $A,B$, and put $N(A)=L(O,A)$.
The two reducible cubics
\[
 L(P,Q)L(S,R)N(V),
 \qquad
 L(Q,R)L(P,V)N(S)
\]
cut on $C$, respectively,
\begin{align*}
 &O+P+Q+R+S+\iota(S)+V+\iota(V)+\iota(T),\\
 &O+P+Q+R+S+\iota(S)+V+\iota(V)+\iota(T').
\end{align*}
These sums denote effective zero-dimensional schemes.  Both divisors contain
the same effective divisor of degree eight, and each has degree nine.
Lemma~\ref{lem:residual} gives
$\iota(T)=\iota(T')$, hence $T=T'$.  Thus
$(P\oplus Q)\oplus R=P\oplus(Q\oplus R)$.
\end{proof}

\subsection{Intersections of two quadrics}

Let $C\subset\PP^3$ be a smooth complete intersection of two quadrics.  The
Koszul resolution
\[
 0\longrightarrow\mathcal O_{\PP^3}(-4)
 \longrightarrow\mathcal O_{\PP^3}(-2)^{\oplus2}
 \longrightarrow\mathcal O_{\PP^3}
 \longrightarrow\mathcal O_C\longrightarrow0
\]
remains exact after extension of the ground field and gives
$H^0(C_{\bar k},\mathcal O_{C_{\bar k}})=\bar k$.  Thus $C$ is geometrically
connected; since it is smooth, it is geometrically integral.  Its Hilbert
series $(1-t^2)^2/(1-t)^4$ gives degree four and arithmetic genus one, and
smoothness therefore gives geometric genus one.  Fix $O\in C(k)$.

A length-three subscheme spans a linear space of dimension at most two.  The
linear span of $O+P+Q$ is in fact a plane: if the subscheme were contained in
a line, the restriction of each defining quadric to that line would have at
least three zeros counted with multiplicity and hence would vanish
identically.  The line would then be contained in $C$, contrary to geometric
integrality.  Denote the resulting plane by $H(P,Q)$.  Thus
\[
        H(P,Q)\cap C=O+P+Q+R
\]
for a unique residual point $R$.  This definition includes tangent and
osculating multiplicities.  In particular, set
\[
       N(P)=H(O,P)=\Span(2O+P),
\]
so that $N(O)=\Span(3O)$ is the unique osculating plane determined by the
length-three subscheme $3O$.

Assume that $C$ has an involution $\eta:C\to C$ defined over $k$ such that
\begin{equation}\label{eq:normal-plane}
        N(P)\cap C=2O+P+\eta(P)
\end{equation}
for every $P$.  Put $J=\eta(O)$.

\begin{theorem}[Residual-plane group law]\label{thm:quartic}
With the preceding hypotheses, define
\begin{equation}\label{eq:plane-law}
        P\boxplus Q=\eta(R),
        \qquad H(P,Q)\cap C=O+P+Q+R.
\end{equation}
Then $(C(k),\boxplus)$ is an abelian group.  Its identity is $O$.  The inverse
of $P$ is the residual point $P^{-}$ in
\begin{equation}\label{eq:inverse-plane}
        \Span(O+J+P)\cap C=O+J+P+P^{-}.
\end{equation}
\end{theorem}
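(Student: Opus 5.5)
The plan follows the proof of Theorem~\ref{thm:cubic}: check closure, identity and inverses directly from \eqref{eq:normal-plane}, then get associativity from Lemma~\ref{lem:residual} applied to two cubic forms, each a product of three planes. The law is well defined because $H(P,Q)$ is a plane for every $P,Q$, as shown before the statement. Its residual point $R$ is unique, since $H(P,Q)\cap C$ has degree four. When $P,Q\in C(k)$ the plane is Galois invariant, so $R$ is a $k$-point, and $\eta(R)\in C(k)$ because $\eta$ is defined over $k$. The definition is symmetric in $P,Q$, so $\boxplus$ is commutative.

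\emph{Identity.} Take $Q=O$. Then $H(P,O)=\Span(2O+P)=N(P)$, and \eqref{eq:normal-plane} gives $H(P,O)\cap C=O+P+O+\eta(P)$. Hence $R=\eta(P)$ and $P\boxplus O=\eta^2(P)=P$, using that $\eta$ is an involution.

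\emph{Inverses.} The subscheme $O+J+P$ has length three, so by the same argument as for $O+P+Q$ it spans a plane. Its fourth point $P^{-}$ is a $k$-point by Galois invariance. Since $P,P^{-}$ lie on this plane, $H(P,P^{-})\supseteq O+P+P^{-}$ and also contains $J$. So $H(P,P^{-})=\Span(O+J+P)$, with residual point $J$, and therefore $P\boxplus P^{-}=\eta(J)=\eta^2(O)=O$. The degenerate cases $J=O$, $J=P$ or $P^{-}=P$ need care: the phrase ``the plane contains $O+P+P^{-}$'' must be read scheme-theoretically. The subscheme of length three inside $O+J+P+P^{-}$ determines the plane, and the degree-four count forces the residual to be $J$ with the correct multiplicity.

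\emph{Associativity.} Put $S=P\boxplus Q$, $T=S\boxplus R$, $V=Q\boxplus R$ and $T'=P\boxplus V$, so that $\eta(S)$ is the residual of $H(P,Q)$, and so on. I would compare the cubic forms $H(P,Q)\,H(S,R)\,N(V)$ and $H(Q,R)\,H(P,V)\,N(S)$. By the definitions and \eqref{eq:normal-plane}, they cut on $C$, respectively,
\begin{align*}
&4O+P+Q+R+S+V+\eta(S)+\eta(V)+\eta(T),\\
&4O+P+Q+R+S+V+\eta(S)+\eta(V)+\eta(T').
\end{align*}
Both divisors have degree $12$ and share an effective divisor of degree $11$. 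Lemma~\ref{lem:residual} then gives $\eta(T)=\eta(T')$, hence $T=T'$.

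The hypothesis of that lemma that the forms restrict nontrivially needs a check: no plane contains $C$. If $C\subset\Pi$, then $C$ would lie in the intersection of $\Pi$ with a defining quadric. This is a conic or all of $\Pi$. The first is impossible for an integral curve of degree four; the second cannot hold for both quadrics, because their intersection is one-dimensional. I expect the main obstacle to be bookkeeping rather than any idea. One must confirm that the displayed divisor identities hold as equalities of schemes when points coincide, for example $S=O$, $V=\eta(V)$ or $P=Q$. This follows because each plane section is defined intrinsically as a degree-four divisor and restriction of products adds divisors, exactly as in the plane-cubic case.
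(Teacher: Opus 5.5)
Your proposal is correct and follows the paper's proof essentially step for step. The identity comes from $H(O,P)=N(P)$, the inverse from the plane $\Span(O+J+P)$ having residual $J$ for the pair $P,P^{-}$, and associativity from comparing $H(P,Q)H(S,R)N(V)$ with $H(Q,R)H(P,V)N(S)$ via Lemma~\ref{lem:residual}, exactly as in the paper; your extra checks (rationality of the residual points, nonvanishing restriction of the plane products, and the scheme-theoretic reading of coincident points) fill in details the paper leaves implicit rather than giving a different route.
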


\begin{proof}
The definition is symmetric in $P,Q$.  Since $H(O,P)=N(P)$, its residual point
is $\eta(P)$, and therefore $O\boxplus P=P$.  In
\eqref{eq:inverse-plane} the residual point for the pair $P,P^{-}$ is $J$;
thus $P\boxplus P^{-}=\eta(J)=O$.

For associativity, set
\[
 S=P\boxplus Q,\quad T=S\boxplus R,
 \qquad V=Q\boxplus R,\quad T'=P\boxplus V.
\]
The products of planes
\[
 H(P,Q)H(S,R)N(V),
 \qquad
 H(Q,R)H(P,V)N(S)
\]
restrict to cubic forms on $C$.  Their zero schemes are
\begin{align*}
 &4O+P+Q+R+S+\eta(S)+V+\eta(V)+\eta(T),\\
 &4O+P+Q+R+S+\eta(S)+V+\eta(V)+\eta(T').
\end{align*}
Both divisors contain the same effective divisor of degree eleven; each has
degree twelve.  By
Lemma~\ref{lem:residual}, $\eta(T)=\eta(T')$.  Since $\eta$ is an involution,
$T=T'$, proving associativity.
\end{proof}

\begin{figure}[t]
\centering
\begin{tikzpicture}[
  node distance=8mm and 11mm,
  box/.style={draw,rounded corners,align=center,inner sep=5pt},
  arr/.style={-{Latex[length=2mm]},thick}
]
\node[box] (pq) {$P,Q$};
\node[box,right=of pq] (plane) {$H(P,Q)\cap C$\\$=O+P+Q+R$};
\node[box,right=of plane] (sum) {$P\boxplus Q=\eta(R)$};
\draw[arr] (pq)--(plane);
\draw[arr] (plane)--(sum);
\node[box,below=12mm of plane] (left)
 {$H(P,Q)H(S,R)N(V)$};
\node[box,right=18mm of left] (right)
 {$H(Q,R)H(P,V)N(S)$};
\node[below=7mm of left,align=center] (eleven)
 {eleven common zeros on $C$};
\draw[arr] (left)--(eleven);
\draw[arr] (right)--(eleven);
\node[box,below=7mm of eleven] (last)
 {$\eta((P\boxplus Q)\boxplus R)$\\
  $=\eta(P\boxplus(Q\boxplus R))$};
\draw[arr] (eleven)--(last);
\end{tikzpicture}
\caption{The plane rule and the residual-point proof of associativity.}
\label{fig:plane-law}
\end{figure}

The following sections identify the relevant involution and verify
\eqref{eq:normal-plane} in the coordinates of each model.

\section{Short Weierstrass equations}\label{sec:short-Weierstrass}

Assume $\operatorname{char}k\ne2,3$.  Let
\begin{equation}\label{eq:short-Weierstrass}
 \mathcal W_{a,b}:\qquad
 Y^2Z=X^3+aXZ^2+bZ^3,\qquad 4a^3+27b^2\ne0,
\end{equation}
and take $O=[0:1:0]$.  An affine singular point would satisfy
$y=0$, $3x^2+a=0$, and $x^3+ax+b=0$; eliminating $x$ gives
$4a^3+27b^2=0$.  Conversely, vanishing of this discriminant produces a
common root of the cubic and its derivative over an algebraic closure.  The
point $O$ is nonsingular.  Thus the stated condition is precisely the
smoothness condition.  The line $Z=0$ meets the cubic at $O$ with
multiplicity three, so $O$ is a flex.  Moreover,
\[
       L(O,(x,y))\cap\mathcal W_{a,b}
       =O+(x,y)+(x,-y).
\]

\begin{definition}[Weierstrass chord law]
If the chord or tangent through $P,Q\in\mathcal W_{a,b}(k)$ has third
intersection $R$, define $P\oplus_W Q$ to be the reflection of $R$ in the
$x$-axis.
\end{definition}

\begin{theorem}\label{thm:Weierstrass-group}
The operation $\oplus_W$ makes $\mathcal W_{a,b}(k)$ an abelian group with
identity $O$ and inverse $-(x,y)=(x,-y)$.  If $x_1\ne x_2$, then
\begin{equation}\label{eq:Weierstrass-add}
 \lambda=\frac{y_2-y_1}{x_2-x_1},\qquad
 x_3=\lambda^2-x_1-x_2,\qquad
 y_3=-y_1+\lambda(x_1-x_3).
\end{equation}
For doubling, the same formulas hold with
\begin{equation}\label{eq:Weierstrass-double-slope}
             \lambda=\frac{3x_1^2+a}{2y_1}
\end{equation}
whenever the displayed denominator is nonzero.
\end{theorem}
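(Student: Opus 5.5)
The plan is to deduce the group structure directly from Theorem~\ref{thm:cubic}, and then derive the explicit formulas by substituting the line into the cubic.

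\textbf{Group structure.} The discussion preceding the definition supplies the hypotheses. $\mathcal W_{a,b}$ is a smooth plane cubic exactly when $4a^3+27b^2\ne0$, and $O=[0:1:0]$ is a $k$-rational flex. The line $L(O,(x,y))$ meets the curve in $O+(x,y)+(x,-y)$. Hence the involution $\iota$ of Theorem~\ref{thm:cubic} is $\iota(x,y)=(x,-y)$ on affine points, and $\iota(O)=O$. The operation $\oplus_W$ (reflect the third point $R$ in the $x$-axis) is therefore literally $\iota(R)$. Theorem~\ref{thm:cubic} then gives an abelian group with identity $O$ and inverse $\iota(P)=(x,-y)$. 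Rationality of $R$ follows from the Galois-invariance remark in the introduction, or directly from the formulas below.

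\textbf{Addition formula.} Suppose $x_1\ne x_2$. The chord through the two points is non-vertical, so it is the affine line $y=y_1+\lambda(x-x_1)$ with $\lambda=(y_2-y_1)/(x_2-x_1)$. Substituting into $y^2=x^3+ax+b$ gives a monic cubic in $x$:
\[
x^3-\lambda^2x^2+\cdots=0.
\]
Its roots, counted with the scheme-theoretic multiplicities of the intersection, are the $x$-coordinates of the three intersection points. None of these points is $O$, because the line is not $Z=0$, and the affine restriction accounts for all three points by B\'ezout. Vieta's relation then gives $x_R=\lambda^2-x_1-x_2$. The line gives $y_R=y_1+\lambda(x_R-x_1)$. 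Reflecting yields $x_3=x_R$ and $y_3=-y_1+\lambda(x_1-x_3)$.

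\textbf{Doubling.} For $P=Q$ with $y_1\ne0$, the tangent line has slope given by implicit differentiation, $2y\,dy=(3x^2+a)\,dx$, so $\lambda=(3x_1^2+a)/(2y_1)$. Substituting $y=y_1+\lambda(x-x_1)$ into the cubic produces a polynomial with a double root at $x_1$. This holds because the line is tangent at a smooth point, so the intersection multiplicity there is at least $2$. Vieta's relation again gives $x_R=\lambda^2-2x_1$, and the same reflection step applies.

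The only delicate step is justifying that the multiplicity-counted roots of the substituted cubic match the scheme-theoretic intersection $L\cap\mathcal W_{a,b}$ used in Theorem~\ref{thm:cubic}, including the tangent case. I would handle this by noting that restricting the cubic form to the parametrized line $x\mapsto(x,y_1+\lambda(x-x_1))$ computes exactly the local intersection multiplicities. The point at infinity is excluded because the line's homogenization meets $Z=0$ only at $[1:\lambda:0]\ne O$, which is not on the curve.
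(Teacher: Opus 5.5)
Your proposal is correct and takes the paper's route: you identify the involution of Theorem~\ref{thm:cubic} as $(x,y)\mapsto(x,-y)$ to obtain all group axioms, then substitute the non-vertical chord or tangent into $y^2=x^3+ax+b$ and read off $x_R=\lambda^2-x_1-x_2$ from Vieta's formula before reflecting. Your additional checks spell out what the paper leaves implicit: that the chord meets $Z=0$ only at $[1:\lambda:0]$, which is not on the curve, and that $x$ parametrizes the line, so root multiplicities equal scheme-theoretic intersection multiplicities.
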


\begin{proof}
The description of the line through $O$ shows that the involution in
Theorem~\ref{thm:cubic} is $(x,y)\mapsto(x,-y)$.  Hence that theorem proves
all group axioms.  For the affine formulas, write the chord as
$y=\lambda x+\nu$.  Substitution in \eqref{eq:short-Weierstrass} gives
\begin{equation}\label{eq:Weierstrass-elimination}
 (\lambda x+\nu)^2-(x^3+ax+b)
 =-\bigl(x^3-\lambda^2x^2+(a-2\lambda\nu)x+b-\nu^2\bigr).
\end{equation}
Its three roots are $x_1,x_2,x_R$, so Vieta's formula gives
$x_R=\lambda^2-x_1-x_2$.  Reflection of
$R=(x_R,\lambda x_R+\nu)$ gives \eqref{eq:Weierstrass-add}.  Replacing the
chord by the tangent and differentiating
$y^2=x^3+ax+b$ gives \eqref{eq:Weierstrass-double-slope}.  The projective
chord--tangent definition covers vertical chords, points of order two, and
$O$.
\end{proof}

\begin{figure}[htbp]
\centering
\begin{tikzpicture}[x=1.45cm,y=1.25cm]
  \draw[->,black!65] (-1.65,0)--(2.35,0) node[right] {$x$};
  \draw[->,black!65] (0,-1.75)--(0,1.80) node[above] {$y$};
  \draw[modelcurve,domain=-1.30:2.05,samples=100]
      plot ({\x},{.75*sqrt((\x)^3-\x+1)});
  \draw[modelcurve,domain=-1.30:2.05,samples=100]
      plot ({\x},{-.75*sqrt((\x)^3-\x+1)});
  \draw[construction] (-1.38,.954)--(1.65,.418) node[right] {$L(P,Q)$};
  \node[modelpoint,label=above left:$P$] at (-.80,.851) {};
  \node[modelpoint,label=above:$Q$] at (.20,.674) {};
  \node[modelpoint,label=above right:$R$] (WR) at (.656,.593) {};
  \node[sumpoint,label=below right:$P\oplus_W Q$] (WS) at (.656,-.593) {};
  \draw[-{Latex[length=2mm]},thick,blue!65!black] (WR)--(WS)
       node[midway,right] {$y\mapsto-y$};
  \node[anchor=west] at (1.25,1.52) {$O=[0:1:0]$};
\end{tikzpicture}
\caption{The chord--tangent rule on a short Weierstrass cubic.  The chord
through $P,Q$ meets the cubic again at $R$, and reflection gives the sum.}
\label{fig:Weierstrass-addition}
\end{figure}

\section{The curves \texorpdfstring{$C_d$}{Cd} in every characteristic}
\label{sec:Cd}

Let
\begin{equation}\label{eq:Cd-affine}
        C_d:\qquad (u^2+u)(v^2+v)=d.
\end{equation}
For a characteristic-uniform study of this family, including its projective
geometry and explicit arithmetic in characteristics $2$ and $3$, see
\cite{Wu2026Cd}.
Its natural completion lies in $\PP^1\times\PP^1$.  Under the Segre embedding
write
\[
 (A:B:C:D)=(U_0V_0:U_1V_0:U_0V_1:U_1V_1),
\]
so that $(A:B:C:D)=(1:u:v:uv)$ on the affine chart.  The completion is
\begin{equation}\label{eq:Cd-projective}
        AD=BC,\qquad D(A+B+C+D)=dA^2.
\end{equation}
It is smooth precisely when
\begin{equation}\label{eq:Cd-smooth}
        d(1-16d)\ne0.
\end{equation}
Indeed, on the affine chart the partial derivatives of
$f=(u^2+u)(v^2+v)-d$ are
\[
 f_u=(2u+1)(v^2+v),\qquad
 f_v=(u^2+u)(2v+1).
\]
If $d\ne0$, the two quadratic factors are nonzero on $C_d$.  In odd
characteristic an affine singularity must therefore have
$u=v=-\tfrac12$, which lies on the curve exactly when $16d=1$.  In
characteristic $2$ the two partial derivatives cannot vanish at a point with
$d\ne0$.  The four boundary points are smooth in reciprocal local charts.
For example, near $(u,v)=(0,\infty)$, with $w=v^{-1}$, the equation is
\[
        (u^2+u)(1+w)-dw^2=0,
\]
whose derivative with respect to $u$ is $1$ at $(0,0)$; the other three
charts follow by these two involutions and by interchanging $u$ and $v$.
Finally, when $d=0$
the affine equation has crossings among the four factors
$u(u+1)v(v+1)$.
In characteristic $2$ this condition is simply $d\ne0$, while in
characteristic $3$ it is $d(1-d)\ne0$.

Set
\begin{equation}\label{eq:Cd-data}
 O=[0:0:1:0],\qquad
 \eta_d(A:B:C:D)=(A:-A-B:C:-C-D).
\end{equation}
Thus $O=(0,\infty)$ and, on the affine chart,
\[
        \eta_d(u,v)=(-u-1,v).
\]
The point $J=\eta_d(O)$ is $[0:0:1:-1]=(-1,\infty)$.

\begin{definition}[Residual-plane law on $C_d$]\label{def:Cd-law}
For $P,Q\in C_d(k)$, let the plane through the length-three scheme $O+P+Q$
meet \eqref{eq:Cd-projective} at the residual point $R$.  Define
\[
        P\oplus_d Q=\eta_d(R).
\]
\end{definition}

\begin{theorem}\label{thm:Cd-group}
If \eqref{eq:Cd-smooth} holds, then $(C_d(k),\oplus_d)$ is an abelian group with
identity $O$.  Its inverse is
\begin{equation}\label{eq:Cd-inverse}
 -(A:B:C:D)=(A:B:-A-C:-B-D),
 \qquad -(u,v)=(u,-v-1).
\end{equation}
This statement is valid in every characteristic.
\end{theorem}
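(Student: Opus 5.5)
The plan is to reduce everything to Theorem~\ref{thm:quartic}. Two hypotheses must be checked. First, the curve \eqref{eq:Cd-projective} is a smooth complete intersection of two quadrics in $\PP^3$; this follows from the smoothness computation above under \eqref{eq:Cd-smooth}. Second, the involution $\eta_d$ of \eqref{eq:Cd-data} satisfies the normal-plane condition \eqref{eq:normal-plane} for every $P$. Once both are in place, Theorem~\ref{thm:quartic} gives the abelian group with identity $O$, and it only remains to identify the inverse $P^-$ from \eqref{eq:inverse-plane}. None of this uses division by $2$ or $3$, so the argument is uniform in the characteristic.

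First I check that $\eta_d$ is a $k$-involution of the curve. It preserves $AD=BC$, since $A(-C-D)=C(-A-B)$ follows from $AD=BC$. It preserves the second quadric, since $(-C-D)(A-A-B+C-C-D)=(C+D)(B+D)=D(A+B+C+D)$, again using $AD=BC$. It is visibly linear of order two and fixes no characteristic-dependent constants.

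Next I compute the planes through $2O$. The tangent line at $O=[0:0:1:0]$ is found by differentiating both quadrics at $O$. The differential of $AD-BC$ there is $-B$, and that of $D(A+B+C+D)-dA^2$ is $D$. So the tangent line is $B=D=0$, and the planes containing $2O$ form the pencil $\alpha B+\beta D=0$. Each such plane meets $C$ in $2O$ plus two further points. Since $\eta_d$ preserves the linear forms $B$ and $D$ only up to the substitution $B\mapsto -A-B$, I cannot simply say that $\eta_d$ preserves each plane. I will instead argue on the affine chart. There the plane $\alpha u+\beta uv=0$ away from $u=0$ reads $v=-\alpha/\beta$, a horizontal ruling. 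That line meets $C_d$ at the two roots $u,-u-1$ of $u^2+u=d/(v^2+v)$. The component $u=0$ of the plane equation accounts for the $u=0$ ruling through $O$.

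This is the step I expect to be the main obstacle: carefully matching multiplicities. Write the hyperplane section $\alpha B+\beta D$ in $\PP^1\times\PP^1$ coordinates as $U_1(\alpha V_0+\beta V_1)$. This is the divisor $\{U_1=0\}+\{v=\text{const}\}$ on $C_d$. The ruling $u=0$ meets $C_d$ in $(0,\infty)+(0,0)$, and $(0,0)$ is a base point of the pencil, which seems wrong. So I will recheck which point counts as the tangent direction. Possibly the correct pencil is through $O$ with the ruling $\{u=0\}$ cut as $O+(0,-1)$; the bookkeeping must be fixed using the chart $w=v^{-1}$ given in the paper. Whatever the precise form, the target is $N(P)\cap C=2O+P+\eta_d(P)$, with the residual pair lying on one ruling line and hence swapped by $u\mapsto -u-1$. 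Tangent cases, where $P=\eta_d(P)$ when $2u+1=0$, and characteristic $2$ follow automatically because the statement is about divisors.

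Finally, for the inverse I take $J=(-1,\infty)$ and consider the plane through $O+J+P$. Both $O$ and $J$ lie on $v=\infty$, i.e.\ $V_0=0$, so the hyperplane $V_0(\gamma U_0+\delta U_1)$ cuts $\{v=\infty\}$, which is $O+J$, plus a vertical ruling $u=\mathrm{const}$. That ruling meets the curve at $(u,v)$ and $(u,-v-1)$. This gives \eqref{eq:Cd-inverse}, which translates to $(A:B:-A-C:-B-D)$ in Segre coordinates.
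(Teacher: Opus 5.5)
Your route is the paper's: check that $\eta_d$ is an involution preserving both quadrics, verify the normal-plane condition \eqref{eq:normal-plane} using the pencil of planes through the tangent line $B=D=0$, apply Theorem~\ref{thm:quartic}, and read off the inverse from planes through $O$ and $J$. Three parts are correct. Your check that $\eta_d$ preserves the quadrics is right. So is your tangent line. The inverse computation is also right: planes through $O,J$ are exactly $\gamma A+\delta B=0$, i.e.\ $V_0(\gamma U_0+\delta U_1)=0$, which cut out $O+J$ plus a vertical ruling.

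The gap is the step that carries the content of the theorem, the normal-plane condition. You leave it unresolved, and your intermediate claim there is false. The ruling $u=0$ does not meet $C_d$ at $(0,0)$, since $(0,0)$ gives $(u^2+u)(v^2+v)=0\ne d$. The same objection rules out your alternative $O+(0,-1)$. As written, you have not shown that each plane of the pencil meets $C_d$ at $O$ with multiplicity exactly two, with the other two points on one horizontal ruling. So Theorem~\ref{thm:quartic} cannot yet be invoked.

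The missing computation is short. On $U_1=0$ the bihomogeneous equation $U_1V_1(U_0+U_1)(V_0+V_1)=dU_0^2V_0^2$ reduces to $dU_0^2V_0^2=0$. Since $d\ne0$, this gives $\{U_1=0\}\cap C_d=2O$; equivalently, put $u=0$ in the chart $(u^2+u)(1+w)=dw^2$. Hence for every $(\alpha:\beta)\in\PP^1$ the plane $\alpha B+\beta D=0$, that is $U_1(\alpha V_0+\beta V_1)=0$, cuts $2O$ plus the section of the ruling $\alpha V_0+\beta V_1=0$.

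Your affine argument covers only the rulings $v\notin\{0,-1,\infty\}$. There the ruling meets $C_d$ in the two roots of $u^2+u=d/(v^2+v)$, which sum to $-1$, and it has no point with $u=\infty$. You still need the remaining members of the pencil:
\begin{itemize}
\item $\beta=0$ gives the ruling $v=\infty$, which cuts $O+J$, so the section is $3O+J$. This identifies $B=0$ as the osculating plane $N(O)=\Span(3O)$ and gives $\eta_d(O)=J$ and $\eta_d(J)=O$.
\item The rulings $v=0$ and $v=-1$ have their residual points at infinity, giving the sections $2O+2K$ and $2O+2L$. Here $K$ and $L$ are fixed by $\eta_d$.
\end{itemize}
Only with these cases is \eqref{eq:normal-plane} established for every $P$, which is what the paper verifies and what Theorem~\ref{thm:quartic} requires.
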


\begin{proof}
The tangent line at $O$ is $B=D=0$.  A plane through this line and an affine
point $(u,v)$ fixes the $v$-coordinate.  Its two intersections away from the
double point $O$ are $(u,v)$ and $(-u-1,v)$, counted with multiplicity.  Thus
\[
          N(P)\cap C_d=2O+P+\eta_d(P).
\]
The four boundary points are
\[
 O=(0,\infty),\quad J=(-1,\infty),\quad
 K=(\infty,0),\quad L=(\infty,-1).
\]
Substitution in \eqref{eq:Cd-projective} gives the same equality at the
boundary: the exceptional plane sections are $3O+J$, $2O+2K$, and
$2O+2L$.  The map $\eta_d$ preserves both defining quadrics and satisfies
$\eta_d^2=1$.  Theorem~\ref{thm:quartic} therefore applies.

Finally, a plane through $O,J,P$ fixes the $u$-coordinate.  Its fourth point is
$(u,-v-1)$, which proves \eqref{eq:Cd-inverse} directly.
\end{proof}

On the affine chart, a plane through $O$ has an equation
\[
        \alpha+\beta u+\gamma uv=0.
\]
Generically it may be written
\begin{equation}\label{eq:Cd-plane}
        v=q+\frac{p}{u}.
\end{equation}
For $P_i=(u_i,v_i)$, $i=1,2$, with $u_1\ne u_2$, the plane through
$O,P_1,P_2$ is obtained from
\begin{equation}\label{eq:pq}
 p=\frac{u_1u_2(v_1-v_2)}{u_2-u_1},
 \qquad
 q=\frac{u_2v_2-u_1v_1}{u_2-u_1}.
\end{equation}
Substitution of \eqref{eq:Cd-plane} into \eqref{eq:Cd-affine} gives the
two-line elimination identity
\begin{align}
u\left((u^2+u)
 \left(\left(q+\frac pu\right)^2+q+\frac pu\right)-d\right)
={}&(q^2+q)u^3+(2pq+p+q^2+q)u^2 \notag\\
 &+(p^2+2pq+p-d)u+p^2.                 \label{eq:Cd-elimination}
\end{align}
The three roots on the affine part are $u_1,u_2,u_R$.  Since the sum is
$\eta_d(R)$, its first coordinate is $u_3=-u_R-1$.  Vieta's formula in
\eqref{eq:Cd-elimination} yields
\begin{equation}\label{eq:Cd-add}
 \boxed{
 u_3=u_1+u_2+\frac{p(2q+1)}{q(q+1)},
 \qquad
 v_3=q-\frac{p}{u_3+1}.}
\end{equation}
Formula \eqref{eq:Cd-add} is used when its displayed denominators are nonzero;
Definition~\ref{def:Cd-law} covers every exceptional and boundary case.

For doubling at an affine point $(u,v)$, choose the tangent member of
\eqref{eq:Cd-plane}.  On its generic domain the parameters satisfy
\begin{equation}\label{eq:Cd-tangent}
 q=v-\frac pu,
 \qquad
 p(u+1)(2v+1)=u(2u+1)(v^2+v),
\end{equation}
and the output is again given by \eqref{eq:Cd-add} with $u_1=u_2=u$.

The derivation does not invert $2$ or $3$.  In characteristic $2$, subtraction
and addition agree, and \eqref{eq:pq}--\eqref{eq:Cd-add} become
\begin{equation}\label{eq:Cd-char2}
 \boxed{
 p=\frac{u_1u_2(v_1+v_2)}{u_1+u_2},\quad
 q=\frac{u_2v_2+u_1v_1}{u_1+u_2},\quad
 u_3=u_1+u_2+\frac{p}{q(q+1)},\quad
 v_3=q+\frac{p}{u_3+1}.}
\end{equation}
In characteristic $3$, where $2=-1$, the last two coordinates read
\begin{equation}\label{eq:Cd-char3}
 \boxed{
 u_3=u_1+u_2+\frac{p(1-q)}{q(q+1)},
 \qquad
 v_3=q-\frac{p}{u_3+1},}
\end{equation}
with $p,q$ as in \eqref{eq:pq}.  Equations \eqref{eq:Cd-char2} and
\eqref{eq:Cd-char3} are the corresponding specializations of
\eqref{eq:Cd-add}.

\begin{figure}[htbp]
\centering
\begin{tikzpicture}[x=1.25cm,y=1.05cm]
  \draw[->,black!65] (-3.25,0)--(2.35,0) node[right] {$u$};
  \draw[->,black!65] (0,-1.9)--(0,2.15);
  \node[anchor=south east] at (-.06,2.10) {$v$};
  \node[anchor=west] at (.16,1.98) {$O=(0,\infty)$};
  \draw[guide] (-0.5,-1.72) node[below right] {$u=-\tfrac12$}
      --(-0.5,1.82);
  \draw[modelcurve] plot[smooth] coordinates
    {(-3,.30) (-2.6,.70) (-2.1,1.05) (-1.4,1.40)
     (-.5,1.55) (.4,1.40) (1.1,1.05) (1.6,.70) (2,.30)};
  \draw[modelcurve] plot[smooth] coordinates
    {(-3,-.35) (-2.45,-.95) (-1.45,-1.30) (-.5,-1.38)
     (.45,-1.30) (1.45,-.95) (2,-.35)};
  \draw[construction] plot[smooth] coordinates
    {(-2.85,.46) (-2.6,.70) (-2.0,1.58) (-1.4,1.40)
     (-.3,.82) (.55,.78) (1.1,1.05) (1.55,1.48)};
  \node[modelpoint,label=below left:$P$] at (-2.6,.70) {};
  \node[modelpoint,label=above:$Q$] at (-1.4,1.40) {};
  \node[modelpoint,label=above right:$R$] (R) at (1.1,1.05) {};
  \node[sumpoint,label=above left:$P\oplus_d Q$] (S) at (-2.1,1.05) {};
  \draw[-{Latex[length=2mm]},thick,blue!65!black] (R)--(S)
      node[midway,below] {$\eta_d:u\mapsto-u-1$};
  \node[red!72!black,align=left] at (2.45,1.30)
      {$v=q+p/u$\\(plane trace)};
\end{tikzpicture}
\caption{The residual-plane addition rule on $C_d$ (schematic affine view).  The
upper end of the $v$-axis represents $O=(0,\infty)$.  The plane trace through
$O,P,Q$ has residual point $R$; reflection in $u=-\tfrac12$ gives
$P\oplus_d Q=\eta_d(R)$.}
\label{fig:Cd-addition}
\end{figure}

\section{Edwards curves}

Assume $\operatorname{char}k\ne2,3$, and let
\begin{equation}\label{eq:Edwards-affine}
        E_\delta:\qquad x^2+y^2=1+\delta x^2y^2,
        \qquad \delta(1-\delta)\ne0.
\end{equation}
Its smooth degree-four completion in $\PP^3$ is
\begin{equation}\label{eq:Edwards-projective}
        XY=ZT,\qquad X^2+Y^2=Z^2+\delta T^2,
\end{equation}
with $(X:Y:Z:T)=(x:y:1:xy)$.
The Jacobian criterion gives smoothness under the stated parameter condition.
On the affine chart, a singular point would satisfy
$x(1-\delta y^2)=y(1-\delta x^2)=0$; the curve equation then forces
$\delta=1$.  On $Z=0$, the two gradients are independent when
$\delta\ne0$.  Thus $\delta(1-\delta)\ne0$ is also necessary.

Put
\begin{equation}\label{eq:Edwards-data}
 O=(0:1:1:0),\qquad
 \eta_E(X:Y:Z:T)=(X:-Y:Z:-T).
\end{equation}
Thus $\eta_E(x,y)=(x,-y)$ and $J=\eta_E(O)=(0:-1:1:0)$.

\begin{definition}[Edwards residual-plane law]
Let $H_E(P,Q)$ be the plane through $O,P,Q$, counted with tangent
multiplicities, and write
\[
        H_E(P,Q)\cap E_\delta=O+P+Q+R.
\]
Define $P\oplus_E Q=\eta_E(R)$.
\end{definition}

\begin{theorem}\label{thm:Edwards-group}
The operation $\oplus_E$ makes $E_\delta(k)$ an abelian group with identity
$O$ and inverse
\[
        -(X:Y:Z:T)=(-X:Y:Z:-T),\qquad -(x,y)=(-x,y).
\]
On the affine open set where the denominators do not vanish,
\begin{equation}\label{eq:Edwards-add}
 \boxed{
 (x_1,y_1)\oplus_E(x_2,y_2)=
 \left(
 \frac{x_1y_2+y_1x_2}{1+\delta x_1x_2y_1y_2},
 \frac{y_1y_2-x_1x_2}{1-\delta x_1x_2y_1y_2}
 \right).}
\end{equation}
\end{theorem}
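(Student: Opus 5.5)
The plan is to apply Theorem~\ref{thm:quartic} to the projective model \eqref{eq:Edwards-projective}, exactly as was done for $C_d$ in Theorem~\ref{thm:Cd-group}, and then to extract \eqref{eq:Edwards-add} by an elimination and Vieta computation. Smoothness of the complete intersection was checked above, so $C=E_\delta$ is a smooth intersection of two quadrics in $\PP^3$ and $O=(0:1:1:0)\in C(k)$.

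The first step is to verify \eqref{eq:normal-plane} for $\eta_E$. The map $\eta_E$ visibly preserves $XY=ZT$ and $X^2+Y^2=Z^2+\delta T^2$, and $\eta_E^2=1$. To find the tangent line at $O$, differentiate both quadrics there: the gradient of $XY-ZT$ at $O$ is $(1,0,0,-1)$, and the gradient of $X^2+Y^2-Z^2-\delta T^2$ is $(0,2,-2,0)$. The tangent line is therefore $X=T$, $Y=Z$. A plane containing this line has the form $\alpha(X-T)+\beta(Y-Z)=0$. On the affine chart it reads $\alpha x(1-y)+\beta(y-1)=0$, that is $(y-1)(\beta-\alpha x)=0$. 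The factor $y=1$ only recovers $O$ with its multiplicity, so the residual pair on such a plane is cut out by $x=\beta/\alpha$, a fixed $x$-coordinate. The two points with a given $x$ are $(x,\pm y)$, so $N(P)\cap C=2O+P+\eta_E(P)$ generically. The boundary cases must be checked by substitution:
\begin{itemize}
\item the points at infinity $Z=0$ give $XY=0$ and $X^2+Y^2=\delta T^2$, namely $(1:0:0:\pm\delta^{-1/2})$ and $(0:1:0:\pm\delta^{-1/2})$, possibly defined only over $\bar k$;
\item the special affine points are $(0,-1)=J$ and $(\pm1,0)$.
\end{itemize}
For example, the plane $X=T$ (that is, $\beta=0$) gives $x(1-y)=0$, whose trace is $2O+J+\ldots$ together with a contribution at infinity, and these degenerate sections must be checked to be of the form $2O+P+\eta_E(P)$.

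With \eqref{eq:normal-plane} established, Theorem~\ref{thm:quartic} yields the group structure with identity $O$. For the inverse, note that $J=(0,-1)$. A plane through $O$ and $J$ contains the line $X=T=0$, so it has the form $\gamma X+\epsilon T=0$, i.e.\ $x(\gamma+\epsilon y)=0$. Its residual pair has fixed $y$-coordinate, and the fourth point in \eqref{eq:inverse-plane} is therefore $(-x,y)$, or $(-X:Y:Z:-T)$ projectively.

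For the formula, a plane through $O$ has the affine equation $a x+b(y-1)+c\,xy=0$, since a general plane $aX+bY+b'Z+cT=0$ through $O$ satisfies $b'=-b$. Solving gives $y=(b-ax)/(b+cx)$, a Möbius function of $x$, so I would parametrize the trace as $y=(1-\lambda x)/(1+\mu x)$ with $\lambda,\mu$ determined linearly by $P_1,P_2$. Substituting into $x^2+y^2=1+\delta x^2y^2$ and multiplying by $(1+\mu x)^2$ gives a quartic in $x$. It has a root at $x=0$ coming from $O$; after removing it, a cubic remains with roots $x_1,x_2,x_R$, and Vieta's formula gives $x_R$. The sum is then $x_3=x_R$ and $y_3=-y_R$.

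The main obstacle is simplifying this Vieta expression to the symmetric form \eqref{eq:Edwards-add}. Rather than simplifying directly, I would verify the formula by checking that the candidate point $(x_3,-y_3)$ lies on the plane through $O,P_1,P_2$ modulo the curve equations for $P_1,P_2$; this is a polynomial identity check. Because the residual point is unique and generic inputs give distinct points, this identifies $\eta_E(R)$ on the open set where the denominators are nonzero.
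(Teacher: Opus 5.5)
Your overall route is the paper's: verify \eqref{eq:normal-plane} for $\eta_E$, apply Theorem~\ref{thm:quartic}, get the inverse from the plane through $O,J,P$, and get the formula from the trace $y=(b-ax)/(b+cx)$. Two steps, however, are not yet proofs.

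First, Theorem~\ref{thm:quartic} needs \eqref{eq:normal-plane} for every $P$. You establish it only generically and defer the degenerate planes, and your one claim about them is wrong. Put $r^2=\delta$, $P_\pm=(0:\pm r:0:1)$ and $Q_\pm=(\pm r:0:0:1)$. The plane $X-T=0$ misses all four points with $Z=0$: there $X-T$ equals $-1$ at $P_\pm$ and $\pm r-1\neq0$ at $Q_\pm$. Its trace $x(1-y)=0$ gives $O+J$ from $x$ and $2O$ from $1-y$. So $N(O)\cap E_\delta=3O+J$ with nothing at infinity, and this is exactly the case that forces $\eta_E(O)=J$.

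Your fibre idea does close the gap once it is stated globally. The involution $\eta_E$ is the covering involution of the degree-two map $x=X/Z\colon E_\delta\to\PP^1$, so every fibre $x^{*}(t)$ equals $P+\eta_E(P)$ as a divisor. Since $\operatorname{div}(y-1)=2O-P_+-P_-$ and $Z=0$ cuts out $P_++P_-+Q_++Q_-$, the plane $\alpha(X-T)+\beta(Y-Z)=0$ cuts out $2O+x^{*}(\beta/\alpha)$ for every $(\alpha:\beta)$. This includes $\alpha=0$ (fibre $Q_++Q_-$), $\beta=0$ (fibre $O+J$), and $\beta/\alpha=\pm1/r$ (fibre $2P_\pm$). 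The paper argues differently. With $\ell_1=X-T$ and $\ell_2=Y-Z$, it shows $\eta_E(P)\in N(P)$ from the identity $\ell_2(P)\ell_1(\eta_E P)-\ell_1(P)\ell_2(\eta_E P)=2(X_PY_P-Z_PT_P)=0$. It then uses a degree count and checks these special planes by substitution.

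Second, and more seriously, your verification of \eqref{eq:Edwards-add} does not identify the sum. That $(x_3,-y_3)$ satisfies the plane equation is a single scalar condition, met by every point of the affine trace, so it cannot single out $R$. Uniqueness of the residual point only applies to points of $H_E(P_1,P_2)\cap E_\delta$. You can repair this in either of two ways:
\begin{itemize}
\item prove $x_3^2+y_3^2=1+\delta x_3^2y_3^2$ modulo the relations for $P_1,P_2$, which is the two-identity check the paper carries out for the Jacobi-intersection and Huff formulas; or
\item keep Vieta's value of $x_R$, show it equals $x_3$, and then use the M\"obius relation $y=(b-ax)/(b+cx)$ to conclude $y_R=-y_3$, which is the paper's own argument.
\end{itemize}
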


\begin{proof}
The tangent line at $O$ is
\[
        X-T=0,\qquad Y-Z=0.
\]
Put $\ell_1=X-T$ and $\ell_2=Y-Z$.  If
$P=(X_P:Y_P:Z_P:T_P)\ne O$, its normal plane is
\begin{equation}\label{eq:Edwards-normal-plane}
 \ell_2(P)\ell_1-\ell_1(P)\ell_2=0.
\end{equation}
The identity
\[
 \ell_2(P)\ell_1(\eta_E(P))-\ell_1(P)\ell_2(\eta_E(P))
       =2(X_PY_P-Z_PT_P)=0
\]
shows that this plane contains $\eta_E(P)$.  It also contains the tangent
line at $O$, and hence its section contains the length-two subscheme $2O$.
Since a plane section has degree four, its section is
$2O+P+\eta_E(P)$, with the same conclusion when the two last points
coincide.  On the affine chart, this multiplicity statement is visible from
the trace
\[
             (1-y')(x'-x)=0:
\]
the factor $y'=1$ gives $2O$, while $x'=x$ gives the two roots $y'$ and
$-y'$.  To check the boundary, work over $\bar k$ and choose $r^2=\delta$.
The four points with $Z=0$ are
\[
 P_\pm=(0:\pm r:0:1),\qquad Q_\pm=(\pm r:0:0:1).
\]
The planes $Y-Z\pm r(X-T)=0$ cut out $2O+2P_\pm$, respectively, while
$Y-Z=0$ cuts out $2O+Q_++Q_-$.  Finally, the osculating plane is
$N(O):X-T=0$, and
\[
             N(O)\cap E_\delta=3O+J.
\]
Indeed, its affine trace is $x(1-y)=0$: the factor $x$ gives $O+J$, and
$1-y$ gives $2O$.
Thus the normal-plane relation holds for every point.  The map $\eta_E$
preserves both quadrics and is an involution, so
Theorem~\ref{thm:quartic} proves the group axioms.  The plane through $O,J,P$
fixes $y$ and gives the stated inverse.

A plane through $O$ has equation
\[
        aX+b(Y-Z)+cT=0.
\]
For generic affine inputs, coefficients of the plane through $P_1,P_2$ may be
chosen as
\[
\begin{aligned}
 a&=(y_1-1)x_2y_2-x_1y_1(y_2-1),\\
 b&=x_1x_2(y_1-y_2),\\
 c&=x_1(y_2-1)-x_2(y_1-1).
\end{aligned}
\]
The plane equation gives $y=(b-ax)/(b+cx)$.  Substitution into the Edwards
equation, followed by multiplication by $(b+cx)^2$, yields
\begin{align*}
x\bigl(& (c^2-\delta a^2)x^3+2b(\delta a+c)x^2\\
       &+(a^2+(1-\delta)b^2-c^2)x-2b(a+c)\bigr)=0.
\end{align*}
The cubic factor has roots $x_1,x_2,x_R$.  Vieta's formula therefore gives
\[
 x_R=-x_1-x_2-\frac{2b(\delta a+c)}{c^2-\delta a^2},
 \qquad
 -y_R=\frac{ax_R-b}{b+cx_R}.
\]
Using $x_i^2+y_i^2=1+\delta x_i^2y_i^2$ in these two expressions reduces them
to \eqref{eq:Edwards-add}.  Since $\eta_E(R)=(x_R,-y_R)$, this proves the
formula on its stated affine domain.  The projective definition covers the
remaining cases.
\end{proof}

\begin{figure}[htbp]
\centering
\begin{tikzpicture}[x=1.22cm,y=1.08cm]
  \draw[->,black!65] (-2.25,0)--(2.25,0) node[right] {$x$};
  \draw[->,black!65] (0,-2.05)--(0,2.12) node[above] {$y$};
  \draw[modelcurve] plot[smooth cycle,tension=.72] coordinates
    {(0,1.72) (1.15,1.42) (1.62,.78) (1.72,0)
     (1.62,-.78) (1.15,-1.42) (0,-1.72)
     (-1.15,-1.42) (-1.62,-.78) (-1.72,0)
     (-1.62,.78) (-1.15,1.42)};
  \draw[construction] plot[smooth] coordinates
    {(0,1.72) (-.48,1.52) (-1.45,.72) (-1.35,-.92)
     (-.82,-1.52) (.20,-1.18) (1.38,.78)};
  \node[modelpoint,label=above right:$O$] at (0,1.72) {};
  \node[modelpoint,label=above left:$P$] at (-1.45,.72) {};
  \node[modelpoint,label=below left:$Q$] at (-.82,-1.52) {};
  \node[modelpoint,label=above right:$R$] (R) at (1.38,.78) {};
  \node[sumpoint,label=below right:$P\oplus_E Q$] (S) at (1.38,-.78) {};
  \draw[-{Latex[length=2mm]},thick,blue!65!black] (R)--(S)
      node[midway,left] {$\eta_E$};
  \node[red!72!black,align=left] at (-2.55,-1.42)
      {$ax+b(y-1)+cxy=0$\\(plane trace)};
\end{tikzpicture}
\caption{Edwards addition in its affine chart.  The bilinear trace of
$H_E(P,Q)$ gives $R$, and $\eta_E(x,y)=(x,-y)$ gives the sum.}
\label{fig:Edwards-addition}
\end{figure}

\section{Hessian curves}

Assume $\operatorname{char}k\ne2,3$.  For $\lambda^3\ne1$, let
\begin{equation}\label{eq:Hessian}
 H_\lambda:\qquad X^3+Y^3+Z^3=3\lambda XYZ.
\end{equation}
The gradient equations at a singular point would be
$X^2=\lambda YZ$, $Y^2=\lambda XZ$, and $Z^2=\lambda XY$.  If one coordinate
vanishes, all three vanish; otherwise multiplication gives $\lambda^3=1$.
Conversely, when $\lambda^3=1$, the point
$(1:\lambda^{-1}:1)$ satisfies the equation and all three gradient equations.
Thus the stated condition is exactly the smoothness condition.
The point $O=[1:-1:0]$ is a flex.  The third point on the line $OP$ is
\begin{equation}\label{eq:Hessian-iota}
        \iota_H(X:Y:Z)=(Y:X:Z).
\end{equation}

\begin{definition}[Hessian chord law]
If the line through $P,Q$ meets \eqref{eq:Hessian} once more at $R$, define
\[
        P\oplus_H Q=\iota_H(R).
\]
Tangents are used when $P=Q$.
\end{definition}

\begin{theorem}\label{thm:Hessian-group}
The set $H_\lambda(k)$ is an abelian group under $\oplus_H$, with identity
$O$ and inverse $-P=\iota_H(P)$.  For generic projective inputs,
\begin{align}
 X_3&=Y_1^2X_2Z_2-Y_2^2X_1Z_1,\notag\\
 Y_3&=X_1^2Y_2Z_2-X_2^2Y_1Z_1,\label{eq:Hessian-projective-add}\\
 Z_3&=Z_1^2X_2Y_2-Z_2^2X_1Y_1.\notag
\end{align}
On $Z_1=Z_2=1$ this is
\begin{equation}\label{eq:Hessian-affine-add}
 x_3=\frac{y_1^2x_2-y_2^2x_1}{x_2y_2-x_1y_1},
 \qquad
 y_3=\frac{x_1^2y_2-x_2^2y_1}{x_2y_2-x_1y_1}.
\end{equation}
Generic doubling is
\begin{equation}\label{eq:Hessian-double}
 2(x,y)=\left(
 \frac{y(1-x^3)}{x^3-y^3},
 \frac{x(y^3-1)}{x^3-y^3}
 \right).
\end{equation}
\end{theorem}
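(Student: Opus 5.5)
The plan is to deduce the group axioms from Theorem~\ref{thm:cubic} and then obtain the formulas by locating the third point of a chord directly.

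\textbf{Group axioms.} The smoothness discussion before the definition shows that $H_\lambda$ is a smooth plane cubic. Two facts are needed about $O=[1:-1:0]$.

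First, $O$ is a flex. On $Z=0$ the equation becomes $X^3+Y^3=(X+Y)(X+\omega Y)(X+\omega^2Y)$. The tangent at $O$ has gradient $(3,3,3\lambda)$, so it is $X+Y+\lambda Z=0$. Substituting $X=-Y-\lambda Z$ into $X^3+Y^3+Z^3-3\lambda XYZ$ should leave a multiple of $Z^3$, namely $(1-\lambda^3)Z^3$. This is a one-line expansion, and it shows that the tangent meets the curve only at $O$, with multiplicity three.

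Second, $\iota_H$ is the map of Theorem~\ref{thm:cubic}. The equation is symmetric in $X,Y$, so $P'=(Y:X:Z)$ lies on the curve. The points $O$, $P$, $P'$ are collinear because
\[
\det\begin{pmatrix}1&-1&0\\X&Y&Z\\Y&X&Z\end{pmatrix}=Z(Y+X)-Z(X+Y)=0 .
\]
Hence $O+P+\iota_H(P)$ is the section by $L(O,P)$. When $P'=P$ or $P=O$, the multiplicities follow from the same determinant, or from the flex computation when $P=O$. Theorem~\ref{thm:cubic} then gives the group structure, the identity $O$, and the inverse $\iota_H$.

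\textbf{Addition formula.} It suffices to exhibit the third point $R$ of the line $L(P_1,P_2)$ and check that $\iota_H(R)$ is the displayed $(X_3:Y_3:Z_3)$. Equivalently, one shows that $R=(Y_3:X_3:Z_3)$ lies on the curve and on the line.

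\emph{Collinearity.} Expand $\det(P_1,P_2,R)$. Its terms group into multiples of $X_i^3+Y_i^3+Z_i^3-3\lambda X_iY_iZ_i$, which vanish. A cleaner route is to verify the identity
\[
Y_3X_1+X_3Y_1+\dots
\]
in the form $\det(P_1,P_2,R)=\sum(\text{polynomials})\cdot F(P_i)$.

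\emph{Lying on the curve.} This is the main computational obstacle. Instead of expanding $F(R)$ directly, I will argue via Vieta. Parametrize the line as $sP_1+tP_2$. Then $F(sP_1+tP_2)=s^2t\,A+st^2B$, since $F(P_1)=F(P_2)=0$. The third root is $(s:t)=(B:-A)$, where $A,B$ are the polarizations of $F$. So $R=BP_1-AP_2$.

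One then reduces $BP_1-AP_2$, modulo $F(P_1)=F(P_2)=0$, to a scalar multiple of $(Y_3:X_3:Z_3)$. The expected scalar is something like $X_1Y_2-X_2Y_1$ times a constant, and this is the computation most likely to cause trouble. To control it, I would check the first coordinate and then obtain the others by symmetry. The permutation $X\leftrightarrow Y$, combined with swapping $P_1\leftrightarrow P_2$, transforms $X_3$ into $-Y_3$ appropriately. The analogous cyclic symmetries preserve the Hessian equation.

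\textbf{Affine and doubling formulas.} Setting $Z_1=Z_2=1$ in \eqref{eq:Hessian-projective-add} and dividing by $Z_3$ gives \eqref{eq:Hessian-affine-add} immediately.

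Doubling cannot be obtained by direct substitution, because the projective formula degenerates to $0$ when $P_1=P_2$. Instead, take the tangent line at $(x,y)$, with gradient $(x^2-\lambda y,\;y^2-\lambda x,\;1-\lambda xy)$. Apply the same Vieta argument with $A=0$ and $B$ replaced by the second derivative along the tangent direction. Finally, eliminate $\lambda$ using $3\lambda xy=x^3+y^3+1$ to arrive at \eqref{eq:Hessian-double}. One can also verify the resulting point directly: check that it lies on the tangent line and on the curve, and that it is not $P$ itself generically.
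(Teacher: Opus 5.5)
\textbf{The gap is in the addition formula.} The group-axiom part follows the paper. Your $R=BP_1-AP_2$, with $A=P_2\cdot\nabla F(P_1)$ and $B=P_1\cdot\nabla F(P_2)$, is the paper's ``divide the binary cubic by the two known linear factors.'' But the step that carries the content is left open: you never show that $BP_1-AP_2$ is proportional to $(Y_3,X_3,Z_3)$, and the tactics you propose cannot do it.

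Both vectors are bihomogeneous of bidegree $(2,2)$ in $(P_1,P_2)$. The ideal $(F(P_1),F(P_2))$ has no nonzero elements in that bidegree, so ``reducing modulo $F$'' leaves $BP_1-AP_2$ unchanged. Since it involves $\lambda$ and $(Y_3,X_3,Z_3)$ does not, the two are not constant multiples of each other. A factor such as $X_1Y_2-X_2Y_1$ is excluded by degree, and the true factor is a nonconstant rational function on $H_\lambda\times H_\lambda$. Checking one coordinate and transporting by symmetry therefore proves nothing until that factor is known.

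\textbf{How to close it.} Prove collinearity first, as an exact polynomial identity. No curve equation is needed, contrary to your remark that the determinant groups into multiples of $F(P_i)$:
\[
 (Y_3,X_3,Z_3)=\sigma P_1+\tau P_2,\qquad
 \begin{aligned}
 \sigma&=X_1Y_2Z_2+X_2Y_1Z_2+X_2Y_2Z_1,\\
 \tau&=-(X_1Y_1Z_2+X_1Y_2Z_1+X_2Y_1Z_1).
 \end{aligned}
\]
Because no line lies on the cubic, $(A,B)\ne(0,0)$. Also $P_1\ne P_2$ whenever the tuple is nonzero. Hence the points $\sigma P_1+\tau P_2$ and $BP_1-AP_2$ of $L(P_1,P_2)$ coincide if and only if the single scalar $A\sigma+B\tau$ vanishes. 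Writing $A/3=X_1^2X_2+Y_1^2Y_2+Z_1^2Z_2+\lambda\tau$ and $B/3=X_1X_2^2+Y_1Y_2^2+Z_1Z_2^2-\lambda\sigma$, the $\lambda$-terms cancel and
\[
 \tfrac13(A\sigma+B\tau)=X_2Y_2Z_2\,F(P_1)-X_1Y_1Z_1\,F(P_2).
\]
So $(Y_3:X_3:Z_3)$ is the residual point whenever the tuple is nonzero. Completed this way, your route is slightly sharper than the paper's check ($F(P_3)=0$ plus the determinant). It identifies the residual root with its multiplicity, so it also covers chords tangent at $P_1$ or $P_2$, and no separate argument that $\iota_H(P_3)\ne P_1,P_2$ is needed.

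\textbf{Two minor points.}
\begin{itemize}
\item A vanishing determinant cannot give the multiplicity $O+2P$ when $P=\iota_H(P)\ne O$. Instead, note that the swap preserves $L(O,P)$ and acts on it as a nontrivial involution fixing only $O$ and $P$. The residual point is fixed, and it cannot be $O$, because the tangent at $O$ meets the cubic only there.
\item For doubling, a direction point $V$ on the tangent is not on the curve, so the $t^3F(V)$ term survives. The residual point is $F(V)P-(P\cdot\nabla F(V))V$, not the chord expression with $A=0$. Your fallback direct check (on the tangent, on the curve, different from $P$) is fine.
\end{itemize}
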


\begin{proof}
The equation is symmetric in $X,Y$, and the determinant of the rows
$O,P,\iota_H(P)$ is zero.  Hence these three points are collinear.  The tangent
at $O$ is $X+Y+\lambda Z=0$.  Substitution of
$(X:Y:Z)=(1:-1-\lambda t:t)$ into \eqref{eq:Hessian} gives
$(1-\lambda^3)t^3$, so the tangent meets the cubic at $O$ with multiplicity
three.  Theorem~\ref{thm:cubic} now proves the group axioms.

For the formula, restrict the cubic equation to the line spanned by
$P_1,P_2$.  The resulting binary cubic has the two known zeros corresponding
to $P_1,P_2$; division by these two linear factors and reflection of the third
zero give \eqref{eq:Hessian-projective-add}.  Equivalently, if
$P_3=(X_3:Y_3:Z_3)$ is the tuple in that formula, direct expansion using
$F(P_1)=F(P_2)=0$ gives
\[
 F(P_3)=0,\qquad
 \det\!\begin{pmatrix}
 X_1&Y_1&Z_1\\ X_2&Y_2&Z_2\\ Y_3&X_3&Z_3
 \end{pmatrix}=0,
\]
where $F=X^3+Y^3+Z^3-3\lambda XYZ$.  Thus
$P_1,P_2,\iota_H(P_3)$ are collinear and $P_3=P_1\oplus_H P_2$ on the generic
domain.  For $P=(x,y)$, the tangent line is
\[
 (x^2-\lambda y)X+(y^2-\lambda x)Y+(1-\lambda xy)Z=0.
\]
Its third intersection, followed by $X\leftrightarrow Y$, gives
\eqref{eq:Hessian-double}.  If a displayed tuple vanishes or an affine
denominator is zero, the chord--tangent definition gives the value.
\end{proof}

\begin{figure}[htbp]
\centering
\begin{tikzpicture}[x=1.25cm,y=1.05cm]
  \draw[modelcurve] plot[smooth cycle,tension=.72] coordinates
    {(-1.60,1.25) (-2.18,.70) (-2.02,-.42) (-1.50,-.27)
     (-.78,-.82) (-.20,.094) (-.42,1.02)};
  \draw[modelcurve] plot[smooth,tension=.72] coordinates
    {(-.12,-1.72) (.12,-.68) (.50,.924) (1.18,1.14)
     (2.00,.71) (2.58,.10) (2.90,-1.08)};
  \draw[construction] (-2.55,-.564)--(2.82,.940)
      node[right] {$L(P,Q)$};
  \draw[secondconstruction] (-1.82,1.284)--(2.43,.643)
      node[right] {$L(O,R)$};
  \node[modelpoint,label=below left:$P$] at (-1.50,-.27) {};
  \node[modelpoint,label=above:$Q$] at (-.20,.094) {};
  \node[modelpoint,label=above right:$R$] at (2.00,.71) {};
  \node[modelpoint,label=above left:$O$] at (-1.60,1.25) {};
  \node[sumpoint] (HS) at (.50,.924) {};
  \node[above=2pt of HS] {$\iota_H(R)=P\oplus_H Q$};
  \node[blue!65!black] at (0,-1.53) {$H_\lambda$};
\end{tikzpicture}
\caption{The two-line rule on the Hessian cubic (projective schematic).
The chord $L(P,Q)$ gives $R$; the line $L(O,R)$ gives the third point
$\iota_H(R)=P\oplus_H Q$.}
\label{fig:Hessian-addition}
\end{figure}

\section{Jacobi quartics}\label{sec:Jacobi-quartic}

Assume $\operatorname{char}k\ne2,3$, and let
\begin{equation}\label{eq:Jacobi-quartic}
        J_a:\qquad y^2=x^4+2ax^2+1,
        \qquad a^2\ne1.
\end{equation}
Use the degree-four embedding
\[
        (x,y)\longmapsto(Z:X:W:Y)=(1:x:x^2:y),
\]
whose projective closure is the smooth intersection
\begin{equation}\label{eq:Jacobi-quartic-projective}
        X^2=ZW,\qquad Y^2=W^2+2aZW+Z^2.
\end{equation}
An affine singularity would have $y=0$ and
$4x(x^2+a)=0$.  The curve equation excludes $x=0$, while $x^2=-a$ gives
$1-a^2=0$.  At $Z=0$ the two points satisfy $X=0$ and $Y=\pm W$, and the
two quadric gradients are independent.  Thus $a^2\ne1$ is precisely the
smoothness condition in the present characteristic.
Set
\begin{equation}\label{eq:Jacobi-quartic-data}
 O=(1:0:0:1),\qquad
 \eta_J(Z:X:W:Y)=(Z:-X:W:Y).
\end{equation}
Here $\eta_J(O)=O$.

\begin{definition}[Jacobi-quartic residual-plane law]
If the plane through $O,P,Q$ has residual intersection $R$ with
\eqref{eq:Jacobi-quartic-projective}, define
\[
        P\oplus_J Q=\eta_J(R).
\]
\end{definition}

\begin{theorem}\label{thm:Jacobi-quartic-group}
The operation $\oplus_J$ makes $J_a(k)$ an abelian group with identity $O$ and
inverse $-(x,y)=(-x,y)$.  On its generic affine domain, put
$r=1-(x_1x_2)^2$.  Then
\begin{align}
 x_3&=\frac{x_1y_2+y_1x_2}{r},\label{eq:Jacobi-quartic-x}\\
 y_3&=\frac{(1+(x_1x_2)^2)(y_1y_2+2ax_1x_2)
              +2x_1x_2(x_1^2+x_2^2)}{r^2}.
                                                        \label{eq:Jacobi-quartic-y}
\end{align}
\end{theorem}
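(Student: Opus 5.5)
The plan is to reduce everything to Theorem~\ref{thm:quartic}: verify the normal-plane relation \eqref{eq:normal-plane} for $O=(1:0:0:1)$ and $\eta_J$, then obtain the affine formulas by Vieta's formula. The map $\eta_J$ visibly preserves both quadrics in \eqref{eq:Jacobi-quartic-projective} and is an involution defined over $k$, so only the plane sections need checking.

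\emph{Step 1: tangent line and normal planes.} Parametrize near $O$ by $t\mapsto(1:t:t^2:y(t))$ with $y(t)=1+at^2+O(t^4)$. The tangent direction is $(0:1:0:0)$, so the tangent line at $O$ is $W=0$, $Y=Z$. A plane containing this line has the form $\alpha W+\beta(Y-Z)=0$. Its affine trace is $\alpha x^2+\beta(y-1)=0$, which is invariant under $x\mapsto -x$. Hence for $P\ne O$ the plane $N(P)$ contains both $P$ and $\eta_J(P)$, and it contains the length-two scheme $2O$. Since a plane section has degree four, $N(P)\cap J_a=2O+P+\eta_J(P)$. The coincident cases need separate multiplicity checks:
\begin{itemize}
\item The point $(0,-1)$ is fixed by $\eta_J$. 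The plane $W=0$ through it cuts $x^2=0$, giving $2O+2(0,-1)$.
\item The two boundary points $(0:0:1:\pm1)$ are fixed by $\eta_J$. The planes $W=\pm(Y-Z)$, that is $y-1=\pm x^2$, meet the curve with multiplicity two there; I expect to confirm this in the chart $x\mapsto 1/x$.
\item For $P=O$, take $Y-Z-aW=0$. Since $y^2=(1+ax^2)^2+(1-a^2)x^4$, we get $y-1-ax^2=O(x^4)$, so the section is $4O=2O+O+\eta_J(O)$.
\end{itemize}
Thus \eqref{eq:normal-plane} holds everywhere, and Theorem~\ref{thm:quartic} gives the group axioms with identity $O$.

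\emph{Step 2: the inverse.} Here $J=\eta_J(O)=O$, so $\Span(O+J+P)=\Span(2O+P)=N(P)$. Its residual point is $\eta_J(P)$, and so \eqref{eq:inverse-plane} gives $-(x,y)=(-x,y)$.

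\emph{Step 3: the formulas.} A plane through $O$ is $\beta X+\gamma W+\delta(Y-Z)=0$. Generically its affine trace is a parabola $y=1+ux+vx^2$, with $u,v$ fixed by $P_1,P_2$:
\[
u=\frac{x_2^2(y_1-1)-x_1^2(y_2-1)}{x_1x_2(x_2-x_1)},\qquad
v=\frac{x_1(y_2-1)-x_2(y_1-1)}{x_1x_2(x_2-x_1)}.
\]
Substituting into \eqref{eq:Jacobi-quartic} and dividing by the factor $x$ (which accounts for $O$) leaves the cubic
\[
(v^2-1)x^3+2uv\,x^2+(u^2+2v-2a)x+2u=0,
\]
whose roots are $x_1,x_2,x_R$. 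Then $x_3=-x_R$ and $y_3=y_R=1+ux_R+vx_R^2$. Vieta's formula gives two expressions for $x_R$: $x_1+x_2+x_R=-2uv/(v^2-1)$, and the product $x_1x_2x_R=-2u/(v^2-1)$. I would use the product relation, as it is closer to the target shape.

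The main obstacle is the final algebraic reduction of these expressions to \eqref{eq:Jacobi-quartic-x} and \eqref{eq:Jacobi-quartic-y} using $y_i^2=x_i^4+2ax_i^2+1$. Rather than simplify directly, I would verify the claimed point. Put $x_3$, $y_3$ equal to the right-hand sides of \eqref{eq:Jacobi-quartic-x} and \eqref{eq:Jacobi-quartic-y}, and check two identities modulo the curve equations:
\begin{itemize}
\item $(-x_3,y_3)$ satisfies the parabola equation $y=1+ux+vx^2$;
\item $(x_3,y_3)$ satisfies \eqref{eq:Jacobi-quartic}.
\end{itemize}
Together these identify $(-x_3,y_3)$ as $R$ on the generic domain, since the plane meets the curve in only four points. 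All exceptional inputs are covered by the projective definition.
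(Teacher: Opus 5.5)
Your proposal is correct and follows the paper's proof essentially step for step. It uses the same tangent line $W=0$, $Y=Z$; the same special sections $W=0$, $Y-Z=\pm W$ and $N(O):Y-Z-aW=0$ (with the $(1-a^2)x^4$ computation giving $4O$); the observation $J=\eta_J(O)=O$ for the inverse; and the same parabola elimination with the same coefficients. The only deviation is in the last step: you certify the target formulas by checking the candidate against the curve and the plane trace, whereas the paper simplifies the Vieta output. Your method is the one the paper itself uses for the Hessian, Jacobi-intersection and Huff models, and it needs two small remarks. First, generically $(-x_3,y_3)\notin\{O,P_1,P_2\}$; specializing to $P_2=O$, where $(-x_3,y_3)=(-x_1,y_1)$, shows this. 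Second, your deferred boundary check follows from degree alone: the trace $y=1\pm x^2$ meets the affine curve only in $(\pm2-2a)x^2=0$, that is, in $2O$.
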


\begin{proof}
The tangent line at $O$ is
\[
        W=0,\qquad Y-Z-aW=0.
\]
Put $\ell_1=W$ and $\ell_2=Y-Z-aW$.  For $P\ne O$, the plane
\begin{equation}\label{eq:Jacobi-normal-plane}
             \ell_2(P)\ell_1-\ell_1(P)\ell_2=0
\end{equation}
contains the tangent line at $O$, as well as $P$ and $\eta_J(P)$, because
$\ell_1$ and $\ell_2$ are unchanged by $X\mapsto-X$.  Its section is therefore
$2O+P+\eta_J(P)$.  Apart from the member $W=0$, its affine trace has the form
$y-1+n x^2=0$; substitution in the quartic shows directly that the remaining
roots are $x$ and $-x$, with their scheme multiplicities.  The plane $W=0$
cuts out $2O+2(1:0:0:-1)$.  The two boundary points are
$B_\pm=(0:0:1:\pm1)$, and the planes $Y-Z=\pm W$ cut out
$2O+2B_\pm$, respectively.  The osculating plane
\[
             N(O):Y-Z-aW=0
\]
cuts out $4O$, since its affine substitution gives
$(a^2-1)x^4$.  This proves the normal-plane relation for every point.  The
map $\eta_J$ preserves both quadrics and is an involution.
Theorem~\ref{thm:quartic} gives the group law and, since $\eta_J(O)=O$, the
stated inverse.

A general plane through $O$ has affine trace
\begin{equation}\label{eq:Jacobi-parabola}
        y=1+mx+nx^2.
\end{equation}
Substitution gives
\begin{align}
 &(1+mx+nx^2)^2-(x^4+2ax^2+1)\notag\\
 &\qquad=x\bigl(2m+(m^2+2n-2a)x+2mnx^2+(n^2-1)x^3\bigr).
                                                        \label{eq:Jacobi-elimination}
\end{align}
Choose $m,n$ so that \eqref{eq:Jacobi-parabola} passes through $P_1,P_2$.
On the generic domain $x_1x_2(x_2-x_1)\ne0$, these coefficients are
\begin{equation}\label{eq:Jacobi-mn}
 m=\frac{x_2^2(y_1-1)-x_1^2(y_2-1)}
          {x_1x_2(x_2-x_1)},\qquad
 n=\frac{x_1(y_2-1)-x_2(y_1-1)}
          {x_1x_2(x_2-x_1)}.
\end{equation}
If $x_R$ is the third nonzero root in \eqref{eq:Jacobi-elimination}, then
$x_3=-x_R$, and Vieta gives
\begin{equation}\label{eq:Jacobi-mn-add}
        x_3=x_1+x_2+\frac{2mn}{n^2-1},\qquad
        y_3=1-mx_3+nx_3^2.
\end{equation}
Substitution of \eqref{eq:Jacobi-mn} into
\eqref{eq:Jacobi-mn-add}, followed by the replacements
$y_i^2=x_i^4+2ax_i^2+1$, gives
\eqref{eq:Jacobi-quartic-x}--\eqref{eq:Jacobi-quartic-y}.  The projective
plane rule handles all exceptional cases.
\end{proof}

\begin{remark}
The parabola in \eqref{eq:Jacobi-parabola} has the same formal shape as the
parabola used by Zwegers for a Weierstrass cubic \cite{Zwegers2024}.  Here it
is the affine trace of a hyperplane section of the embedded Jacobi quartic;
the factor $x$ in \eqref{eq:Jacobi-elimination} records the fixed intersection
at $O$, and the remaining cubic factor contains the two inputs and the
residual point.
\end{remark}

\begin{figure}[htbp]
\centering
\begin{tikzpicture}[x=1.45cm,y=1.05cm]
  \draw[->,black!65] (-2.55,0)--(2.55,0) node[right] {$x$};
  \draw[->,black!65] (0,-2.05)--(0,2.12) node[above] {$y$};
  \draw[modelcurve,domain=-1.32:1.32,samples=90]
      plot ({1.8*\x},{.9*sqrt((\x)^4+1)});
  \draw[modelcurve,domain=-1.32:1.32,samples=90]
      plot ({1.8*\x},{-.9*sqrt((\x)^4+1)});
  \draw[construction,domain=-1.27:1.27,samples=70]
      plot ({1.8*\x},{.9*(1-.05*\x+.25*(\x)^2)});
  \node[modelpoint,label=below right:$O$] at (0,.9) {};
  \node[modelpoint,label=above left:$P$] at (-1.492,1.092) {};
  \node[modelpoint,label=below right:$Q$] at (.400,.901) {};
  \node[modelpoint,label=above right:$R$] (R) at (1.044,.950) {};
  \node[sumpoint,label=above left:$P\oplus_J Q$] (S) at (-1.044,.950) {};
  \draw[-{Latex[length=2mm]},thick,blue!65!black] (R)--(S)
       node[midway,below] {$\eta_J:x\mapsto-x$};
  \node[red!72!black,align=left] at (2.82,1.55)
       {$y=1+mx+nx^2$\\(plane trace)};
\end{tikzpicture}
\caption{Jacobi-quartic addition.  The plotted example has $a=0$; the
construction is general.  A parabola through $O,P,Q$ gives the residual point
$R$, whose reflection in the $y$-axis is the sum.}
\label{fig:Jacobi-quartic-addition}
\end{figure}

\section{Jacobi intersections}

Assume $\operatorname{char}k\ne2,3$, and let
\begin{equation}\label{eq:Jacobi-intersection}
 I_\alpha:\qquad s^2+c^2=1,\qquad \alpha s^2+n^2=1,
 \qquad \alpha(1-\alpha)\ne0.
\end{equation}
The twisted generalization of this intersection model and its explicit
arithmetic were developed by Feng, Nie, and Wu \cite{FengNieWu2013}.
Its projective closure in coordinates $(S:C:N:Z)$ is the smooth intersection
\begin{equation}\label{eq:Jacobi-intersection-projective}
        S^2+C^2=Z^2,\qquad \alpha S^2+N^2=Z^2.
\end{equation}
The Jacobian criterion gives the stated smoothness condition.  If
$\alpha=0$, the points with $N=Z=0$ and $S^2+C^2=0$ are singular.  If
$\alpha\ne0$ and the two quadric gradients are dependent, their $C$- and
$N$-components force $C=N=0$; the two equations then force $\alpha=1$.
Conversely, for $\alpha=1$ the points with $C=N=0$ and $S^2=Z^2$ are
singular.
Put
\begin{equation}\label{eq:Jacobi-intersection-data}
        O=(0:1:1:1),\qquad
        \eta_I(S:C:N:Z)=(-S:C:N:Z).
\end{equation}

\begin{definition}[Jacobi-intersection residual-plane law]
If the plane through $O,P,Q$ has residual intersection $R$ with
\eqref{eq:Jacobi-intersection-projective}, define
\[
        P\oplus_I Q=\eta_I(R).
\]
\end{definition}

\begin{theorem}\label{thm:Jacobi-intersection-group}
The operation $\oplus_I$ makes $I_\alpha(k)$ an abelian group with identity
$O$ and inverse $-(s,c,n)=(-s,c,n)$.  On the affine open set where
\[
        \Delta=1-\alpha s_1^2s_2^2
\]
is nonzero, the sum is
\begin{align}
 s_3&=\frac{s_1c_2n_2+s_2c_1n_1}{\Delta},\notag\\
 c_3&=\frac{c_1c_2-s_1s_2n_1n_2}{\Delta},
                                                        \label{eq:Jacobi-intersection-add}\\
 n_3&=\frac{n_1n_2-\alpha s_1s_2c_1c_2}{\Delta}.
\notag
\end{align}
\end{theorem}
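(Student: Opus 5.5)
The plan is to reduce everything to Theorem~\ref{thm:quartic}. For that I must identify the tangent line at $O$, verify the normal-plane relation \eqref{eq:normal-plane} for the involution $\eta_I$, and then extract the affine formula from a plane section. At $O=(0:1:1:1)$ the gradients of $S^2+C^2-Z^2$ and $\alpha S^2+N^2-Z^2$ are proportional to $(0,2,0,-2)$ and $(0,0,2,-2)$. So the tangent line is $\ell_1=C-Z=0$, $\ell_2=N-Z=0$. Both forms are unchanged by $S\mapsto-S$, and $\eta_I$ visibly preserves both quadrics and is an involution. Note that $J=\eta_I(O)=O$, exactly as in the Jacobi-quartic case.

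For $P\ne O$ I will use the normal plane $\ell_2(P)\ell_1-\ell_1(P)\ell_2=0$, as in \eqref{eq:Edwards-normal-plane} and \eqref{eq:Jacobi-normal-plane}. It contains the tangent line at $O$, so its section contains $2O$, and it contains $P$. Since the plane is $\eta_I$-invariant, its section divisor $2O+P+X$ is $\eta_I$-invariant. Because $\eta_I$ fixes $O$, cancelling gives $X=\eta_I(P)$ as schemes. This argument covers the cases $P=\eta_I(P)$ (the points with $S=0$) and $P$ on the boundary $Z=0$ without separate treatment.

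The one case the invariance argument does not settle is $P=O$. There the section of the osculating plane $N(O)$ has the form $3O+X$ with $\eta_I(X)=X$, but the fixed points of $\eta_I$ are the four points $(0:\pm1:\pm1:1)$, so invariance alone does not force $X=O$. I will check this directly. I will parametrize the pencil of planes $\lambda(C-Z)+\mu(N-Z)=0$ near $O$ using the local parameter $s$, with $c=1-s^2/2+\cdots$ and $n=1-\alpha s^2/2+\cdots$. The osculating member is the one killing the $s^2$ term, namely $\alpha(C-Z)-(N-Z)=0$. On the curve this form is $\alpha(c-1)-(n-1)$, and I will show it vanishes to order four at $s=0$, using $\alpha\ne1$ for nondegeneracy. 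This gives $N(O)\cap I_\alpha=4O=3O+J$.

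With \eqref{eq:normal-plane} established, Theorem~\ref{thm:quartic} gives the group law with identity $O$. The inverse comes from $\Span(O+J+P)=\Span(2O+P)=N(P)$, so $P^-=\eta_I(P)$, which is $-(s,c,n)=(-s,c,n)$.

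For the affine formula, rather than run Vieta on a quartic in a parameter, I will verify the candidate $(s_3,c_3,n_3)$ in \eqref{eq:Jacobi-intersection-add} directly. Using $c_i^2=1-s_i^2$ and $n_i^2=1-\alpha s_i^2$, I first check that it satisfies both affine equations; these are the classical Jacobi elliptic-function addition identities. I then check that $\eta_I(P_3)=(-s_3,c_3,n_3)$ lies in the plane through $O,P_1,P_2$. Concretely, the $4\times4$ determinant with rows $(0,1,1,1)$, $(s_1,c_1,n_1,1)$, $(s_2,c_2,n_2,1)$ and $(-s_3,c_3,n_3,1)$ must vanish after clearing $\Delta$. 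On the generic domain, where $O,P_1,P_2,\eta_I(P_3)$ are distinct, the residual point $R$ is then $\eta_I(P_3)$, so $P_3=P_1\oplus_I P_2$. Outside that domain the projective definition governs, as in the earlier sections.

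I expect the main obstacle to be the determinant identity. Its simplification requires repeated substitution of the two curve relations, and I would organize it by expanding along the first row and grouping terms by powers of $\alpha$.
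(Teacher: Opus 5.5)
Your route is the paper's route: the same tangent line $C=Z=N$, the same normal planes $\ell_2(P)\ell_1-\ell_1(P)\ell_2=0$, and the same order-four check for $N(O)$. The formula is also verified the same way. Your $4\times4$ determinant becomes the paper's $3\times3$ determinant after you subtract the last column from the second and third and scale the last row by $\Delta$. One step, however, fails as written: the claim that $\eta_I$-invariance settles the points with $S=0$ ``without separate treatment.''

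Invariance gives $2O+P+X=2O+\eta_I(P)+\eta_I(X)$, hence $P+X=\eta_I(P)+\eta_I(X)$. If $P\ne\eta_I(P)$, this forces $X=\eta_I(P)$, which does cover the boundary $Z=0$, where $S\neq0$. If $P=\eta_I(P)$, it only yields $\eta_I(X)=X$, so $X$ is one of the four fixed points $(0:\pm1:\pm1:1)$. This is exactly the ambiguity you correctly flagged at $P=O$. It occurs equally at $P=(0:-1:1:1)$, $(0:1:-1:1)$ and $(0:-1:-1:1)$, where invariance alone does not rule out a section $3O+P$ or $2O+P+P'$ with $P'$ another fixed point.

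The paper closes this case by direct computation. The normal planes at these three points are $N-Z=0$, $C-Z=0$ and $N-C=0$. They cut out $2O+2(0:-1:1:1)$, $2O+2(0:1:-1:1)$ and $2O+2(0:-1:-1:1)$ respectively; the last uses $\alpha\ne1$ to get $S^2=0$ from $(1-\alpha)S^2=N^2-C^2$.

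Alternatively, you can repair your invariance argument uniformly with a parity observation:
\begin{itemize}
\item At each fixed point, $s$ is a local parameter. Indeed $(c-1)(c+1)=-s^2$ and $(n-1)(n+1)=-\alpha s^2$, with one factor of each a unit there.
\item $\eta_I^*s=-s$, and every normal-plane form involves only $C,N,Z$. Its restriction therefore lies in $k[[s^2]]$ and vanishes to even order at every fixed point.
\item This forces $X=P$ at the three fixed points other than $O$: odd multiplicity at $O$ or at another fixed point is impossible.
\item It also gives $4O$ for $N(O)$ without your series expansion: the multiplicity is at least $3$ and even.
\end{itemize}
With either fix, the rest of your proposal goes through.
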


\begin{proof}
The tangent line at $O$ is $C=Z$, $N=Z$.  A plane through it is a linear
combination of $\ell_1=C-Z$ and $\ell_2=N-Z$.  For $P\ne O$, its normal
plane is
\begin{equation}\label{eq:Jacobi-intersection-normal-plane}
             \ell_2(P)\ell_1-\ell_1(P)\ell_2=0.
\end{equation}
Because $\eta_I$ changes only $S$, this plane contains $P$ and $\eta_I(P)$;
it also contains the tangent line at $O$.  Hence its section is
$2O+P+\eta_I(P)$ whenever these last two points are distinct.  The remaining
fixed points are verified by the three plane sections
\[
\begin{array}{c@{\qquad}c}
 N-Z=0:&2O+2(0:-1:1:1),\\
 C-Z=0:&2O+2(0:1:-1:1),\\
 N-C=0:&2O+2(0:-1:-1:1).
\end{array}
\]
At a boundary point $Z=0$, one has $S\ne0$, so $P\ne\eta_I(P)$ and the same
degree-four argument applies.  The osculating plane is
\[
            N(O):(N-Z)-\alpha(C-Z)=0.
\]
On the affine chart its substitution reduces to
$\alpha(\alpha-1)(c-1)^2=0$.  Since
$(c-1)(c+1)=-s^2$ and $c+1$ is a unit at $O$, the function $c-1$ has order
two there; hence the section is $4O$.  Thus the normal-plane relation holds
everywhere.  The map $\eta_I$
preserves the two quadrics and is an involution.  Theorem~\ref{thm:quartic}
proves the group axioms; since $\eta_I(O)=O$, the same involution is group
inversion.

A general plane through $O$ has equation
\[
        \ell S+m(C-Z)+r(N-Z)=0.
\]
To verify the affine formula, set
\[
\begin{aligned}
 A&=s_1c_2n_2+s_2c_1n_1,\\
 B&=c_1c_2-s_1s_2n_1n_2,\\
 C_0&=n_1n_2-\alpha s_1s_2c_1c_2.
\end{aligned}
\]
Using $c_i^2=1-s_i^2$ and $n_i^2=1-\alpha s_i^2$, expansion gives
\begin{equation}\label{eq:Jacobi-intersection-identities}
        A^2+B^2=\Delta^2,\qquad
        \alpha A^2+C_0^2=\Delta^2,
\end{equation}
and
\begin{equation}\label{eq:Jacobi-intersection-determinant}
 \det\!\begin{pmatrix}
 s_1&c_1-1&n_1-1\\
 s_2&c_2-1&n_2-1\\
 -A&B-\Delta&C_0-\Delta
 \end{pmatrix}=0.
\end{equation}
The two identities in \eqref{eq:Jacobi-intersection-identities} show that
$P_3=(A/\Delta,B/\Delta,C_0/\Delta)$ lies on both quadrics.
Equation \eqref{eq:Jacobi-intersection-determinant} says that
$O,P_1,P_2,\eta_I(P_3)$ are coplanar.  Hence $P_3$ is the residual-plane sum,
which proves \eqref{eq:Jacobi-intersection-add}.  The projective rule applies
when $\Delta=0$.
\end{proof}

\begin{figure}[htbp]
\centering
\begin{tikzpicture}[x=1.22cm,y=1.08cm]
  \draw[->,black!65] (-2.05,0)--(2.15,0) node[right] {$s$};
  \draw[->,black!65] (0,-1.95)--(0,2.08) node[above] {$c$};
  \draw[guide] (0,-1.72)--(0,1.78);
  \draw[modelcurve] (0,0) circle[radius=1.60];
  \draw[construction] plot[smooth,tension=.65] coordinates
    {(0,1.60) (-.62,1.10) (-1.576,-.278) (-.70,-1.56)
     (.80,-1.386) (1.42,-.35) (.918,1.311)};
  \node[modelpoint,label=above right:$O$] at (0,1.60) {};
  \node[modelpoint,label=below left:$P$] at (-1.576,-.278) {};
  \node[modelpoint,label=below right:$Q$] at (.80,-1.386) {};
  \node[modelpoint,label=above right:$R$] (R) at (.918,1.311) {};
  \node[sumpoint,label=above left:$P\oplus_I Q$] (S) at (-.918,1.311) {};
  \draw[-{Latex[length=2mm]},thick,blue!65!black] (R)--(S)
       node[midway,below] {$\eta_I:s\mapsto-s$};
  \node[red!72!black,align=left] at (2.52,-1.18)
       {projection of\\$\ell s+m(c-1)+r(n-1)=0$};
\end{tikzpicture}
\caption{Jacobi-intersection addition, projected to the $(s,c)$-circle.
The $n$-coordinates are suppressed.  The dashed trace represents the plane
through $O,P,Q$; after finding $R$, the involution preserves $c,n$ and changes
the sign of $s$.}
\label{fig:Jacobi-intersection-addition}
\end{figure}

\section{Huff curves}

Assume $\operatorname{char}k\ne2,3$.  Let $a,b\in k$ satisfy
$ab(a^2-b^2)\ne0$, and consider the smooth plane cubic
\begin{equation}\label{eq:Huff}
 \mathcal H_{a,b}:\qquad
        aX(Y^2-Z^2)=bY(X^2-Z^2).
\end{equation}
Generalized Huff curves containing this model, together with explicit group
arithmetic, were studied by Wu and Feng \cite{WuFeng2012}.
The parameter condition is exactly the smoothness condition.  To see this,
write the left side minus the right side as $F$.  Its partial derivatives are
\[
\begin{aligned}
 F_X&=a(Y^2-Z^2)-2bXY,\\
 F_Y&=2aXY-b(X^2-Z^2),\\
 F_Z&=2Z(bY-aX).
\end{aligned}
\]
If $a=0$ or $b=0$, the cubic is reducible and has singular intersection
points between its components.  Assume therefore that $ab\ne0$.
When $Z=0$, a point with $X=0$ or $Y=0$ has a nonzero partial derivative.  If
$XY\ne0$, the equation $F=0$ gives $aY=bX$, and then
$F_X=-aY^2\ne0$.  Thus there is no singular point at infinity when $ab\ne0$.
When $Z\ne0$, put $Z=1$.  The equation $F_Z=0$ gives $by=ax$, and substitution
in $F_X=F_Y=0$ gives
\[
 (a^2-2b^2)x^2=b^2,\qquad
 (2a^2-b^2)x^2=-b^2.
\]
Since $\operatorname{char}k\ne3$, these equations have a common solution over
an algebraic closure precisely when $a^2=b^2$.  Conversely, if $a^2=b^2$,
choose $x^2=-1$ and put $y=(a/b)x$; all three partial derivatives vanish at
$(x:y:1)$.  Euler's identity $3F=XF_X+YF_Y+ZF_Z$ then shows that this point
lies on the cubic.
The point $O=[0:0:1]$ is a flex.  The third point on the line $OP$ is
\begin{equation}\label{eq:Huff-iota}
        \iota_{\mathcal H}(X:Y:Z)=(X:Y:-Z),
\end{equation}
which is $(-x,-y)$ on the affine chart.

\begin{definition}[Huff chord law]
If the chord or tangent through $P,Q$ has third intersection $R$ with
\eqref{eq:Huff}, define
\[
        P\oplus_{\mathcal H}Q=\iota_{\mathcal H}(R).
\]
\end{definition}

\begin{theorem}\label{thm:Huff-group}
The operation $\oplus_{\mathcal H}$ makes $\mathcal H_{a,b}(k)$ an abelian
group with identity $O$ and inverse $-(x,y)=(-x,-y)$.  On its generic affine
domain,
\begin{equation}\label{eq:Huff-affine-add}
 \boxed{
 x_3=\frac{(x_1+x_2)(1+y_1y_2)}
              {(1+x_1x_2)(1-y_1y_2)},
 \qquad
 y_3=\frac{(y_1+y_2)(1+x_1x_2)}
              {(1-x_1x_2)(1+y_1y_2)}.}
\end{equation}
One homogeneous form of the formula is
\begin{align}
X_3={}&(X_1Z_2+X_2Z_1)(Y_1Y_2+Z_1Z_2)^2
       (Z_1Z_2-X_1X_2),\notag\\
Y_3={}&(Y_1Z_2+Y_2Z_1)(X_1X_2+Z_1Z_2)^2
       (Z_1Z_2-Y_1Y_2),\label{eq:Huff-projective-add}\\
Z_3={}&(Z_1^2Z_2^2-X_1^2X_2^2)
       (Z_1^2Z_2^2-Y_1^2Y_2^2).\notag
\end{align}
\end{theorem}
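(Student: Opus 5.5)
The plan follows the Hessian case.

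\emph{Group axioms.} By the discussion preceding the definition, $\mathcal H_{a,b}$ is smooth and $O=[0:0:1]$ is a flex. Theorem~\ref{thm:cubic} then gives all the group axioms once we know that the third point of $L(O,P)$ is $\iota_{\mathcal H}(P)$.
\begin{itemize}
\item The flex property is checked directly. The tangent at $O$ is found from the gradient: $F_X(O)=-a$, $F_Y(O)=b$, $F_Z(O)=0$, so the tangent is $aX=bY$. Substituting $(X:Y:Z)=(bt:at:1)$ gives $F=ab\,t(a^2-b^2)t^2\cdot(\text{const})$, a triple root at $t=0$.
\item The collinearity of $O,P,\iota_{\mathcal H}(P)$ holds because the determinant with rows $(0,0,1)$, $(X,Y,Z)$, $(X,Y,-Z)$ vanishes.
\item The point $\iota_{\mathcal H}(P)$ lies on the curve because $F$ is even in $Z$.
\end{itemize}
When $P=\iota_{\mathcal H}(P)$, i.e.\ $Z=0$, I must check that $L(O,P)$ is tangent at $P$; this is the only exceptional case. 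Here $F_Z$ vanishes on $Z=0$, so the tangent at $P$ contains the direction $(0:0:1)$, i.e.\ it passes through $O$. The multiplicity statement $O+2P$ then follows by degree. The inverse $-(x,y)=(-x,-y)$ is read off from Theorem~\ref{thm:cubic}.

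\emph{Addition formula.} I will proceed as in the proof of Theorem~\ref{thm:Hessian-group}. Rather than derive the formula, I verify it. Let $P_3$ denote the tuple \eqref{eq:Huff-projective-add}. Two checks are needed:
\begin{enumerate}
\item $F(P_3)=0$ modulo $F(P_1)=F(P_2)=0$;
\item $\det\bigl(P_1;P_2;\iota_{\mathcal H}(P_3)\bigr)=0$, i.e.\ the rows $(X_1,Y_1,Z_1)$, $(X_2,Y_2,Z_2)$, $(X_3,Y_3,-Z_3)$ are dependent.
\end{enumerate}
Together these show that $\iota_{\mathcal H}(P_3)$ is the third point on the chord through $P_1,P_2$, so $P_3=P_1\oplus_{\mathcal H}P_2$ wherever $P_3\neq(0,0,0)$ and $P_1\neq P_2$.

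Working affinely makes both checks manageable. The curve is $a x(y^2-1)=b y(x^2-1)$, equivalently $\frac{x}{x^2-1}\cdot\frac{y^2-1}{y}=\frac ba$. A cleaner way to check (1) is to note that $x_3$ has the shape of the tangent-addition formula in the quantities $x$ and $y$. I would verify that the function $\phi(x)=\frac{x^2-1}{x}$ behaves multiplicatively along the chord, or simply expand symbolically.

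The affine formula \eqref{eq:Huff-affine-add} follows by dividing $X_3,Y_3$ by $Z_3$ and cancelling the common factors $(1-x_1x_2)$ and $(1-y_1y_2)$ respectively. Doubling is covered by continuity of the polynomial identities. Strictly, this uses the fact that the identities are polynomial and hold for the tangent limit; it is simpler to say the projective definition governs the exceptional cases.

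\emph{Main obstacle.} The main obstacle is the polynomial verification of (1) and (2) modulo the two curve relations, since the tuple has degree six in each point. For (2), I would try to exhibit the chord directly. A line through $P_1,P_2$ is $y=\lambda x+\nu$. Substituting into the affine cubic and applying Vieta to the $x^3$ and $x^2$ coefficients gives $x_R$ as a symmetric expression. The identity $x_3=-x_R$ then reduces to a two-variable rational identity, which can be cleared using the curve equations. This is the same Vieta approach used for \eqref{eq:Weierstrass-add}.
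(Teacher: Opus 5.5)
Your proposal follows the paper's proof: the flex at $O$ and the collinearity of $O,P,\iota_{\mathcal H}(P)$ feed Theorem~\ref{thm:cubic}, and the formula is certified by checking that the candidate lies on the curve and that its reflection is collinear with $P_1,P_2$. The paper runs these two checks affinely with $A_x,D_x,A_y,D_y$ and then clears the common denominator $D_xD_y$, and your extra verification that $L(O,P)$ is tangent at the points with $Z=0$ supplies a step the paper leaves implicit. One caution: checks (1)--(2) by themselves only place $\iota_{\mathcal H}(P_3)$ among $P_1,P_2,R$, so they give $P_3=P_1\oplus_{\mathcal H}P_2$ for generic inputs, as the theorem states, and not literally wherever the tuple is nonzero.
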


\begin{proof}
At $O$ the tangent is $aX-bY=0$.  Substituting $(X:Y:Z)=(bt:at:1)$ into
\eqref{eq:Huff} gives $ab(a^2-b^2)t^3$, so $O$ is a flex.  The curve is
invariant under $Z\mapsto-Z$, and $O,P,(X:Y:-Z)$ are collinear.  Therefore
Theorem~\ref{thm:cubic} proves all group axioms on the Huff cubic itself.

For two affine points, put
\[
\begin{aligned}
 A_x&=(x_1+x_2)(1+y_1y_2),&
 D_x&=(1+x_1x_2)(1-y_1y_2),\\
 A_y&=(y_1+y_2)(1+x_1x_2),&
 D_y&=(1-x_1x_2)(1+y_1y_2).
\end{aligned}
\]
On the open set $D_xD_y\ne0$, define $x_3=A_x/D_x$ and $y_3=A_y/D_y$.
Expansion, followed by the two substitutions
$ax_i(y_i^2-1)=by_i(x_i^2-1)$, gives
\begin{align}
 aA_xD_x(A_y^2-D_y^2)&=bA_yD_y(A_x^2-D_x^2),
                                                        \label{eq:Huff-check1}\\
 \det\!\begin{pmatrix}
 x_1&y_1&1\\ x_2&y_2&1\\ -A_x/D_x&-A_y/D_y&1
 \end{pmatrix}&=0.                                  \label{eq:Huff-check2}
\end{align}
The first identity places $(x_3,y_3)$ on the Huff curve, and the second places
$P_1,P_2,(-x_3,-y_3)$ on one line.  Since $(-x_3,-y_3)$ is the reflected
residual point, this proves \eqref{eq:Huff-affine-add}.  Taking the common
homogeneous denominator $D_xD_y$ gives \eqref{eq:Huff-projective-add}.
The chord--tangent definition covers base pairs of the displayed homogeneous
tuple.
\end{proof}

\begin{figure}[htbp]
\centering
\begin{tikzpicture}[x=1.25cm,y=1.0cm]
  \draw[->,black!65] (-2.45,0)--(2.52,0) node[right] {$x$};
  \draw[->,black!65] (0,-2.55)--(0,2.62) node[above] {$y$};
  \draw[modelcurve,domain=-2.25:-.08,samples=110]
    plot ({\x},{.65*((\x)^2-1+sqrt((\x)^4-(\x)^2+1))/\x});
  \draw[modelcurve,domain=.08:2.25,samples=110]
    plot ({\x},{.65*((\x)^2-1+sqrt((\x)^4-(\x)^2+1))/\x});
  \draw[construction,domain=-1.65:1.55]
    plot ({\x},{.65*(1.06157*\x-.15790)})
    node[right] {$L(P,Q)$};
  \draw[secondconstruction,domain=-1.55:1.55]
    plot ({\x},{.57295*\x}) node[right] {$L(O,R)$};
  \node[modelpoint,label=above left:$O$] at (0,0) {};
  \node[modelpoint,label=below left:$P$] at (-1.20,-.931) {};
  \node[modelpoint,label=above left:$Q$] at (.30,.104) {};
  \node[modelpoint,label=above right:$R$] at (.880,.504) {};
  \node[sumpoint] (HuffS) at (-.880,-.504) {};
  \node[below left=2pt and -2pt of HuffS]
      {$\iota_{\mathcal H}(R)=P\oplus_{\mathcal H}Q$};
  \node[blue!65!black] at (1.65,-1.55) {$\mathcal H_{1,2}$};
\end{tikzpicture}
\caption{Huff addition on the representative real curve
$\mathcal H_{1,2}$.  The chord gives $R$, and the line through $O$ and $R$
gives the centrally opposite point $(-x_R,-y_R)$, which is the sum.}
\label{fig:Huff-addition}
\end{figure}

\section{A symmetric QRT biquadratic}\label{sec:QRT}

Assume $\operatorname{char}k\ne2,3$, and consider
\begin{equation}\label{eq:QRT-affine}
  \QQRT_{\alpha,\beta,\gamma}:
  \qquad
  x^2y^2+\alpha(x^2+y^2)+\beta xy+\gamma=0.
\end{equation}
The $C_d$ equation of Section~\ref{sec:Cd} is another symmetric biquadratic.
Its characteristic-uniform treatment and its extension to a broader symmetric
$(2,2)$ family are given in \cite{Wu2026Cd}.
In $\PP^1\times\PP^1$, with coordinates
$((X_0:X_1),(Y_0:Y_1))$, its bihomogeneous equation is
\begin{align}
 X_1^2Y_1^2
 &+\alpha(X_1^2Y_0^2+X_0^2Y_1^2)
 +\beta X_0X_1Y_0Y_1+\gamma X_0^2Y_0^2=0.
                                                        \label{eq:QRT-bihomogeneous}
\end{align}
Under the Segre embedding
\[
 (A:B:C:D)=(X_0Y_0:X_1Y_0:X_0Y_1:X_1Y_1),
\]
this completion is the intersection of two quadrics
\begin{equation}\label{eq:QRT-projective}
 AD=BC,\qquad
 D^2+\alpha(B^2+C^2)+\beta AD+\gamma A^2=0.
\end{equation}

\begin{proposition}\label{prop:QRT-smoothness}
The curve \eqref{eq:QRT-projective} is smooth precisely when
\begin{equation}\label{eq:QRT-discriminant}
 \Delta_{\!Q}=
 \alpha\gamma
 \bigl((2\alpha+\beta)^2-4\gamma\bigr)
 \bigl((2\alpha-\beta)^2-4\gamma\bigr)\ne0.
\end{equation}
When this condition holds, it is a genus-one normal quartic.
\end{proposition}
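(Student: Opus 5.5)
The plan is to reduce smoothness of the Segre image \eqref{eq:QRT-projective} to smoothness of the bidegree-$(2,2)$ curve \eqref{eq:QRT-bihomogeneous} in $\PP^1\times\PP^1$. The Segre embedding is a closed immersion onto the smooth quadric $AD=BC$. A subscheme of that quadric is smooth exactly when its preimage is smooth. So it suffices to find when $F=X_1^2Y_1^2+\alpha(X_1^2Y_0^2+X_0^2Y_1^2)+\beta X_0X_1Y_0Y_1+\gamma X_0^2Y_0^2$ defines a smooth curve. Once smoothness is known, the discussion at the start of the subsection on intersections of two quadrics applies. The curve is then geometrically integral of degree four and genus one. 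This gives the second sentence; alternatively, adjunction for a smooth $(2,2)$ curve gives genus $(2-1)(2-1)=1$.

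\textbf{Boundary points.} Work over $\bar k$ and treat the four charts using the symmetries $x\leftrightarrow y$ and $(x,y)\mapsto(1/x,1/y)$. The latter sends the equation to $\gamma x^2y^2+\alpha(x^2+y^2)+\beta xy+1$, which swaps the roles of $1$ and $\gamma$.

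First suppose $\alpha=0$. At $x=\infty$ the equation in $w=1/x$ becomes $y^2+\beta wy+\gamma w^2=0$. At $(w,y)=(0,0)$ this has no linear term, so the point is singular; hence $\alpha\ne0$ is necessary. Next suppose $\gamma=0$. Then the origin $(0,0)$ lies on the curve with vanishing gradient; hence $\gamma\ne0$ is necessary. Conversely, assume $\alpha\gamma\ne0$. Points with $x=\infty$ satisfy $y^2+\alpha=0$, and there the $w$-derivative vanishes but the $y$-derivative is $2y\ne0$. The same holds on the other boundary line by symmetry. So the boundary contributes exactly the factor $\alpha$. Similarly, the possible affine points with $x=0$ or $y=0$ satisfy $\alpha y^2+\gamma=0$, giving nonzero $y$-derivative $2\alpha y$.

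\textbf{Affine singularities.} Write $f=x^2y^2+\alpha(x^2+y^2)+\beta xy+\gamma$. Then
\[
f_x=2xy^2+2\alpha x+\beta y,\qquad f_y=2x^2y+2\alpha y+\beta x .
\]
Compute $xf_x-yf_y=2\alpha(x^2-y^2)$. Since $\alpha\ne0$, a singular point with $xy\ne0$ has $y=\pm x$.

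For $y=x$, put $t=x^2$. The equation $f_x=0$ gives $2t+2\alpha+\beta=0$. The equation $f=0$ gives $t^2+(2\alpha+\beta)t+\gamma=0$. Eliminating $t=-(2\alpha+\beta)/2$ gives $(2\alpha+\beta)^2=4\gamma$. For $y=-x$ one obtains $(2\alpha-\beta)^2=4\gamma$ in the same way.

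For the converse, each vanishing factor produces $t\ne0$ (because $\gamma\ne0$) and a point $x=\sqrt t$. At that point, Euler-type reasoning is unnecessary: both partials vanish by the symmetry $y=\pm x$, and $f=0$ by construction. Points with $xy=0$ were handled above.

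\textbf{Main obstacle.} The main obstacle is the bookkeeping: making sure the boundary charts really only contribute $\alpha$, and that $\gamma=0$ is caught by the origin. Characteristic $\ne2$ is used to divide by $2$ in the elimination.
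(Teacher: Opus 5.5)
Your proof is correct and follows the paper's argument. It uses the same boundary chart $y^2+\alpha+\beta wy+w^2(\alpha y^2+\gamma)=0$, the same identity $xf_x-yf_y=2\alpha(x^2-y^2)$ forcing $y=\pm x$, and the same elimination along $y=\pm x$; your explicit Segre reduction and the adjunction remark are harmless additions. Two small corrections: at the boundary points the $w$-derivative is $\beta y$, not $0$, though this is irrelevant since $2y\ne0$ already gives smoothness; and you should say explicitly, as the paper does, that $(\infty,\infty)$ is not on the curve, because its chart equation has constant term $1$.
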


\begin{proof}
Write the affine left side of \eqref{eq:QRT-affine} as $F$.  Its partial
derivatives are
\[
 F_x=2x(y^2+\alpha)+\beta y,\qquad
 F_y=2y(x^2+\alpha)+\beta x.
\]
If $\alpha\ne0$ and $F_x=F_y=0$, then
$xF_x-yF_y=2\alpha(x^2-y^2)=0$, so $y=x$ or $y=-x$ over an algebraic
closure.  The point $x=y=0$ is singular precisely when $\gamma=0$.  For a
nonzero solution, the case $y=x$ is singular precisely when
$4\gamma=(2\alpha+\beta)^2$, and the case $y=-x$ precisely when
$4\gamma=(2\alpha-\beta)^2$.

It remains to inspect the boundary.  Near $x=\infty$, put $s=1/x$.  Equation
\eqref{eq:QRT-bihomogeneous} becomes
\[
 y^2+\alpha+\beta sy+s^2(\alpha y^2+\gamma)=0.
\]
At $s=0$ its points satisfy $y^2=-\alpha$, and the derivative with respect to
$y$ is $2y$.  Thus the boundary is smooth there exactly when $\alpha\ne0$;
the calculation at $y=\infty$ is identical, and the bihomogeneous equation has
no point with $x=y=\infty$.  This proves
\eqref{eq:QRT-discriminant}.  A smooth complete intersection of two quadrics
in $\PP^3$ has degree four and genus one.
\end{proof}

A group law requires a rational base point.  Suppose henceforth that
\eqref{eq:QRT-discriminant} holds and fix
$O\in\QQRT_{\alpha,\beta,\gamma}(k)$.  For every $P\in\QQRT$, define
\[
       N_Q(P)=\Span(2O+P).
\]
For $P\ne O$, this is the plane spanned by the tangent line $T_O\QQRT$ and
$P$; for $P=O$, it is the osculating plane $\Span(3O)$.  Define $\eta_Q(P)$
as the residual point in
\begin{equation}\label{eq:QRT-eta}
 N_Q(P)\cap\QQRT=2O+P+\eta_Q(P).
\end{equation}
The plane and its residual point are defined over $k$ whenever $P$ is
$k$-rational.

\begin{definition}[QRT residual-plane law]\label{def:QRT-law}
For $P,Q\in\QQRT_{\alpha,\beta,\gamma}(k)$, let
\begin{equation}\label{eq:QRT-first-plane}
 H_Q(P,Q)\cap\QQRT=O+P+Q+R,\qquad
 H_Q(P,Q)=\Span(O+P+Q).
\end{equation}
Define
\[
             P\oplus_{\mathcal Q} Q=\eta_Q(R).
\]
Repeated points in \eqref{eq:QRT-eta} and \eqref{eq:QRT-first-plane} denote
the corresponding length subschemes, including tangent and osculating
multiplicities.
\end{definition}

\begin{theorem}\label{thm:QRT-group}
Definition~\ref{def:QRT-law} makes
$\QQRT_{\alpha,\beta,\gamma}(k)$ an abelian group under
$\oplus_{\mathcal Q}$, with identity $O$.
If $J=\eta_Q(O)$, the inverse of $P$ is the residual point $P^{-}$ in
\[
       \Span(O+J+P)\cap\QQRT=O+J+P+P^{-}.
\]
\end{theorem}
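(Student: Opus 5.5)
The plan is to reduce Theorem~\ref{thm:QRT-group} to Theorem~\ref{thm:quartic}. By Proposition~\ref{prop:QRT-smoothness}, $\QQRT$ is a smooth complete intersection of two quadrics in $\PP^3$. The discussion preceding Theorem~\ref{thm:quartic} then shows that it is geometrically integral of genus one, and that $H_Q(P,Q)=\Span(O+P+Q)$ is a well-defined plane. Definition~\ref{def:QRT-law} is therefore exactly the rule \eqref{eq:plane-law}, with $\eta=\eta_Q$. The only hypothesis left to check is that $\eta_Q$, defined pointwise by \eqref{eq:QRT-eta}, is an involution $C\to C$ defined over $k$ satisfying \eqref{eq:normal-plane}. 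Relation \eqref{eq:normal-plane} holds by the very definition of $\eta_Q$, so the work lies in showing that $\eta_Q$ is an involutive morphism. Once that is done, the identity, the inverse description via $\Span(O+J+P)$, and associativity follow verbatim from Theorem~\ref{thm:quartic}.

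The involution property is a residual-point argument. Let $P'=\eta_Q(P)$. The plane $N_Q(P)$ contains $2O+P'$, and its section $2O+P+P'$ is a plane section containing the length-three scheme $2O+P'$. That scheme spans a plane by the collinearity argument of Section~2. Hence $N_Q(P')=\Span(2O+P')=N_Q(P)$, and reading off the residual point gives $\eta_Q(P')=P$. This includes the coincident cases $P'=P$ or $P=O$, since all statements are equalities of length subschemes: a plane through $2O$ containing the subscheme $2O+P'$ is unique.

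To see that $\eta_Q$ is a morphism defined over $k$, I would use the pencil of planes through the tangent line $T_O\QQRT$. This pencil gives a degree-two morphism $\pi\colon\QQRT\to\PP^1$, the projection from the tangent line. It is defined over $k$ because $O$ is rational. Its fibres are the residual divisors $P+\eta_Q(P)$, so $\eta_Q$ is the deck involution of this double cover. The deck involution of a degree-two morphism of smooth curves is an automorphism. Concretely, it is $P\mapsto D_{\pi(P)}-P$, where $D_t$ is the fibre, and it can be computed locally by Vieta on the binary quadric cut out on each fibre. Over $k$, the map is Galois equivariant and therefore defined over $k$.

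The main obstacle is the behaviour at $O$ itself, where $\pi$ must be extended by the osculating plane $\Span(3O)$. There one must check that the extension is the limit of the pencil, so that $N_Q(O)$ in \eqref{eq:QRT-eta} agrees with $\pi^{-1}(\pi(O))$. This holds because the divisor $\pi^{*}(t)+2O$ is the plane section, and the unique member containing $3O$ is the osculating plane. With this verified, $\eta_Q$ is an involution satisfying \eqref{eq:normal-plane} everywhere, and Theorem~\ref{thm:quartic} completes the proof.
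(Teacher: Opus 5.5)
Your proposal is correct and follows the paper's proof. You reduce to Theorem~\ref{thm:quartic}, observe that \eqref{eq:normal-plane} holds by the definition of $\eta_Q$, and obtain $\eta_Q^2=1$ from the uniqueness of the plane spanned by the length-three scheme $2O+P'$, exactly as the paper does. The only variation is the morphism step: you realize $\eta_Q$ as the deck involution of the degree-two projection from $T_O\QQRT$, where the paper extends a rational self-map and passes to the limit $2O+P\to 3O$. Your version gives a cleaner identification of $N_Q(O)$ with $\Span(3O)$, since the plane cutting out the fibre through $O$ visibly contains $3O$.
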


\begin{proof}
For $P\ne O$, the coefficients of the plane through $T_O\QQRT$ and $P$ are
rational functions of the coordinates of $P$.  Its residual intersection
therefore defines a rational self-map of $\QQRT$ over $k$.  A rational map
from a smooth projective curve to a projective curve extends across every
point; as $P$ tends to $O$, the length-three scheme $2O+P$ tends to $3O$, so
the extension has the value prescribed by $N_Q(O)=\Span(3O)$.  Thus $\eta_Q$
is a morphism defined over $k$.  If
$N_Q(P)\cap\QQRT=2O+P+P'$, then the same plane contains
$2O+P'$.  By the uniqueness of the plane spanned by a length-three
subscheme, it is $N_Q(P')$, whose other residual point is $P$.  Consequently
$\eta_Q(P')=P$, including $P=O$ and $P'=O$.  Thus $\eta_Q$ is an involution and
\eqref{eq:QRT-eta} is exactly the normal-plane hypothesis of
Theorem~\ref{thm:quartic}.  That theorem gives commutativity, the identity,
the displayed inverse, and associativity by comparing the two products of
three QRT plane sections.
\end{proof}

The construction is explicit when a boundary base point is rational.  If
$r\in k^\times$ satisfies $r^2=-\alpha$, take
\begin{equation}\label{eq:QRT-origin}
             O=(\infty,r)=[0:1:0:r].
\end{equation}
For the two quadrics in \eqref{eq:QRT-projective}, the tangent line at $O$ is
\begin{equation}\label{eq:QRT-tangent}
       C-rA=0,\qquad \beta A-2rB+2D=0.
\end{equation}
Put $L_1=C-rA$ and $L_2=\beta A-2rB+2D$.  If the first plane has produced
$R\ne O$, the second plane in the addition rule is
\begin{equation}\label{eq:QRT-second-plane}
       L_2(R)L_1-L_1(R)L_2=0;
\end{equation}
its section is $2O+R+(P\oplus_{\mathcal Q}Q)$.  If $R=O$, the second plane is
$N_Q(O)=\Span(3O)$.  On the affine chart, every projective plane has a
bilinear trace
\begin{equation}\label{eq:QRT-bilinear-trace}
             a+bx+cy+dxy=0.
\end{equation}
Thus QRT addition is computed by two successive eliminations of a biquadratic
with a bilinear equation: the first gives $R$, and the plane through the
tangent line in \eqref{eq:QRT-second-plane} gives the sum.  For the choice
\eqref{eq:QRT-origin}, a trace through $O$ satisfies $b+dr=0$.

The two fiber involutions of the symmetric biquadratic are
\begin{align}
 \sigma_x(x,y)&=\left(-x-\frac{\beta y}{y^2+\alpha},\,y\right),
                                                        \label{eq:QRT-sigmax}\\
 \sigma_y(x,y)&=\left(x,\,-y-\frac{\beta x}{x^2+\alpha}\right).
                                                        \label{eq:QRT-sigmay}
\end{align}
They are obtained by taking the other root of \eqref{eq:QRT-affine} with one
coordinate fixed, and extend projectively across the displayed denominators.
Their composition is the symmetric QRT evolution of
\cite{QRT1988,QRT1989}.  The group operation
$\oplus_{\mathcal Q}$ is instead defined by the two plane sections
\eqref{eq:QRT-first-plane} and \eqref{eq:QRT-second-plane}.

\begin{figure}[htbp]
\centering
\begin{tikzpicture}[x=1.0cm,y=1.0cm]
  \begin{scope}[shift={(-3.35,0)}]
    \draw[modelcurve] plot[smooth cycle,tension=.75] coordinates
      {(0,1.45) (1.15,1.10) (1.55,.25) (1.25,-.72)
       (.15,-1.38) (-1.05,-1.05) (-1.55,-.20) (-1.25,.82)};
    \draw[construction] plot[smooth,tension=.68] coordinates
      {(0,1.45) (-.72,1.02) (-1.42,.18) (-.72,-1.22)
       (.25,-1.32) (1.43,.30) (1.00,1.08)};
    \node[modelpoint,label=above:$O$] at (0,1.45) {};
    \node[modelpoint,label=left:$P$] at (-1.42,.18) {};
    \node[modelpoint,label=below:$Q$] at (-.72,-1.22) {};
    \node[modelpoint,label=right:$R$] at (1.43,.30) {};
    \node[below=1.73cm,align=center] at (0,0)
      {first plane:\\$O+P+Q+R$};
  \end{scope}
  \draw[-{Latex[length=2.4mm]},very thick] (-1.25,0)--(.55,0)
       node[midway,above] {$R$};
  \begin{scope}[shift={(3.25,0)}]
    \draw[modelcurve] plot[smooth cycle,tension=.75] coordinates
      {(0,1.45) (1.15,1.10) (1.55,.25) (1.25,-.72)
       (.15,-1.38) (-1.05,-1.05) (-1.55,-.20) (-1.25,.82)};
    \draw[guide] (-1.05,1.45)--(1.05,1.45) node[right] {$T_O\QQRT$};
    \draw[secondconstruction]
      (0,1.45) .. controls (1.05,1.45) and (1.72,.83) .. (1.43,.30)
      .. controls (1.05,-.72) and (-.18,-1.47) .. (-1.05,-1.05)
      .. controls (-1.72,-.18) and (-1.02,1.45) .. (0,1.45);
    \node[modelpoint,label=above:$O$] at (0,1.45) {};
    \node[modelpoint,label=right:$R$] at (1.43,.30) {};
    \node[sumpoint,label=left:$P\oplus_{\mathcal Q}Q$] at (-1.05,-1.05) {};
    \node[below=1.73cm,align=center] at (0,0)
      {second plane:\\$2O+R+(P\oplus_{\mathcal Q}Q)$};
  \end{scope}
\end{tikzpicture}
\caption{The two-stage QRT residual-plane rule (schematic).  Red denotes the
bilinear trace of $H_Q(P,Q)$; teal denotes the trace of the plane through
$T_O\QQRT$ and $R$.  Tangency at $O$ accounts for its multiplicity two in the
second panel.}
\label{fig:QRT-addition}
\end{figure}

\end{document}